\let\isout\sout \renewcommand{\sout}[1]{\ifmmode\text{\isout{\ensuremath{#1}}}\else\isout{#1}\fi}
\newtheorem{theorem}{Theorem}[section]
\newtheorem{lemma}[theorem]{Lemma}
\newtheorem{remark}[theorem]{Remark}
\numberwithin{theorem}{section}
\numberwithin{equation}{section}
\numberwithin{figure}{section}
\numberwithin{algorithm}{section}
\newcommand{\TV}[1]{\left\Vert{\hskip -2.7pt}\left\vert #1 \right\vert{\hskip -2.7pt}\right\Vert} 
\providecommand{\Div}{\operatorname{div}} 
\providecommand{\supp}{\operatorname{supp}} 
\newcommand{\wh}{\widehat}
\providecommand{\diam}{\operatorname{diam}}
\newcommand{\diff}{{\rm d}}
\newcommand{\mcE}{\mathcal{E}}
\newcommand{\mcA}{\mathcal{A}}
\newcommand{\mcB}{\mathcal{B}}
\newcommand{\mcD}{\mathcal{D}}
\newcommand{\mcL}{\mathcal{L}}
\newcommand{\mcP}{\mathcal{P}}
\newcommand{\mcT}{\mathcal{T}}
\newcommand{\OchO}{\Omega\cup\hat\Omega}
\newcommand{\mbi}{\mathbf{i}}
\newcommand{\bbR}{\mathbb{R}}
\newcommand{\hu}{\hat{u}}
\newcommand{\bsr}{\boldsymbol{r}}
\newcommand{\bsq}{\boldsymbol{q}}
\newcommand{\Jp}[1]{\left[ #1 \right]} 
\newcommand{\Sp}[1]{\left( #1 \right)} 
\newcommand{\bggS}[1]{\bigg( #1 \bigg)}
\newcommand{\Dp}[1]{\left\langle #1 \right\rangle} 
\newcommand{\norm}[1]{\left\Vert #1 \right\Vert}
\newcommand{\eq}[1]{\begin{align}#1\end{align}}
\newcommand{\eqn}[1]{\begin{align*}#1\end{align*}}
\title{Higher-order FEM and CIP-FEM for Helmholtz equation with high wave number and perfectly matched layer truncation} 
\author{
Yonglin Li\thanks{Department of Applied Mathematics, The Hong Kong Polytechnic University, Hung Hom, Hong Kong. The research of Yonglin Li was partially supported by the CAS AMSS-PolyU Joint Laboratory of Applied Mathematics.
Email address: liyonglin@smail.nju.edu.cn}
\,\,\,\,\mbox{and}\,\,\,
Haijun Wu\thanks{Department of Mathematics, Nanjing University, Jiangsu 210093, People's Republic of China. 
Email address: hjw@nju.edu.cn}
}
\date{}
\begin{document}

\maketitle

\vspace{-10pt}

\begin{abstract}
  The high-frequency Helmholtz equation on the entire space is truncated into a bounded domain using the perfectly matched layer (PML) technique and subsequently, discretized by the higher-order finite element method (FEM) and the continuous interior penalty finite element method (CIP-FEM). 
  By formulating an elliptic problem involving a linear combination of a finite number of eigenfunctions related to the PML differential operator, a wave-number-explicit decomposition lemma is proved for the PML problem, which implies that the PML solution can be decomposed into a non-oscillating elliptic part and an oscillating but analytic part.
  The preasymptotic error estimates in the energy norm for both the $p$-th order CIP-FEM and FEM are proved to be $C_1(kh)^p + C_2k(kh)^{2p} +C_3\mathcal{E}^{\rm PML}$ under the mesh condition that $k^{2p+1}h^{2p}$ is sufficiently small, where $k$ is the wave number,  $h$ is the mesh size, and $\mathcal{E}^{\rm PML}$ is the PML truncation error which is exponentially small. In particular, the dependences of coefficients $C_j~(j=1,2)$ on the source $f$ are improved.
  Numerical experiments are presented to validate the theoretical findings, illustrating that the higher-order CIP-FEM can greatly reduce the pollution errors.

\end{abstract}

{\bf Key words:} 
Helmholtz equation, high wave number, perfectly matched layer, higher-order finite element methods, preasymptotic error estimates.

{\bf AMS subject classifications:}
65N12, 
65N15, 
65N30, 
78A40  

\section{Introduction}\label{sec:Introduction}

In this paper, we consider the high-frequency acoustic scattering problem on the entire space, which is a significant challenge in computational physics and can be described by the Helmholtz equation with the Sommerfeld radiation boundary condition, i.e.,
\begin{alignat}{2}
  -\Delta u - k^2 u &= f &\quad &\text{in}\; \bbR^d, \label{eq:Helm}\\
  \left\vert \frac{\partial u}{\partial r} - \mbi k u \right\vert &= o\Big(r^{\frac{1-d}{2}}\Big) &\quad &\text{for} \; r :=\vert x\vert \rightarrow\infty,\label{eq:Somm}
\end{alignat}
where $d \in \{1,2,3\}$ denotes the dimension of the space, $k$ is the wave number, and $f\in L^2(\bbR^d)$ is a given source satisfying $f=0$ outside a bounded convex domain $\Omega$. For simplicity, we suppose $\Omega = \mathcal{B}_R$, where $\mathcal{B}_r$ denotes the ball with center at the origin and radius $r$. Denote the boundary of $\Omega$ by $\Gamma := \partial\Omega$. We also assume that $k \geq 1$ since we are considering the high-frequency problem. 

To solve the Helmholtz problem \eqref{eq:Helm}--\eqref{eq:Somm} numerically, one should truncate the unbounded domain into a bounded one by imposing some artificial boundary condition on the truncation boundary. 
A highly effective and widely employed method for this purpose is the perfectly matched layer (PML) technique. Originally introduced by B{\'e}renger in \cite{berenger94}, the PML technique has undergone significant development and adaptation for various wave propagation problems in engineering and physics \cite[etc.]{bp06,bp13,bw05,cjm97,ty98,cl05,cm98,cw03,cw08,cw94,cx13,cz10,ls01,ls98}.
The key idea of the PML method is to construct an absorbing layer outside the concerned domain $\Omega$, which can strongly absorb the outgoing waves entering the layer. 
In view of this, one can truncate the scattered field by a homogeneous Dirichlet boundary condition after an appropriate distance away from $\Omega$, say, at $r=\hat R$ for some $\hat R > R$. The PML solution is then solved in the resulting bounded domain, which is denoted by $\mcD:=\mcB_{\hat R}$, as depicted in Figure \ref{fig:PML}.
Previous research has consistently demonstrated that the PML solution exhibits exponential convergence towards the radiation solution as the layer width or PML parameter tends to infinity, as shown in studies such as 
\cite{cl05,bw05,bp06,bp13,lw19,gls21,cgn+22}. 
\begin{figure}[t]
  \centering
  \begin{tikzpicture}
    \draw [fill=gray!10] (0,0) circle [radius=1.5];
    \draw (0,0) circle [radius=2.2];
    \draw [-,thick] (0,0) -- (1.5,0);
    \draw [-,thick] (0,0) -- (1.5,1.61);
    \node [scale=0.9] at (-0.1,-0.3) {$\Omega$};
    \node [scale=0.9] at (2.1,-1.4) {$\mathcal{D}$};
    \node [scale=0.9] at (1.2,0.2) {$R$};
    \node [scale=0.9] at (1.1,1.6) {$\hat R$};
    \node [scale=0.9] at (0,-1.8) {PML};
  \end{tikzpicture}
  \caption{Setting of the PML in two dimensions.}
  \label{fig:PML}
\end{figure}
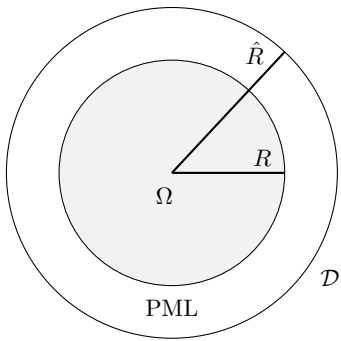

In the bounded domain $\mcD$, the numerical solution of the PML problem is accomplished through discretization methods like the finite element method (FEM).
However, it is well-acknowledged that the standard FEM for wave propagation problems with high wave numbers suffers from the pollution effect, see \cite{ib95,ib97,bs00,dbb99}. 
As mentioned in \cite[etc.]{wu14,zw13,dw15}, we use the phrase ``asymptotic'' refers to the estimate without pollution error and the phrase ``preasymptotic'' refers to the estimate with non-negligible pollution effect. 
For the asymptotic error estimates we refer to 
\cite{ib95,ib97,ms10,ms11} which proved that the $p$-th order FEM is pollution free if $k^{p+1}h^{p}$ is small enough or $p\gtrsim \ln k$ and ${kh}/p$ is sufficiently small, where $h$ represents the mesh size. 
As for the preasymptotic error estimates, a modified duality argument was first proposed in \cite{zw13} to prove hat the pollution error is $O(k^{2p+1}h^{2p})$ if $k^{p+2}h^{p+1}$ is sufficiently small. 
Later, the literature \cite{dw15} relaxed the mesh condition to $k^{2p+1}h^{2p}$ being sufficiently small by utilizing the negative-norm estimates.
To our knowledge, this is the best result by far for the preasymptotic error estimate of higher-order FEM. 
Numerous approaches have emerged over the past two decades to reduce the pollution errors, including $hp$-FEM \cite{ib97,ms10,ms11,cgn+22}, CIP-FEM \cite{wu14,zw13,dw15,lw19,lzz20}, discontinuous Galerkin method (DG) \cite{fw09,fw11,mps13,zw21,gm11}, Trefftz methods \cite{hm07,ghp09,hmp16,hmp16a,hmp11,hmp14,gh14,km18}, and multiscale methods \cite{bgp17,peterseim16,gp15}. In this paper, we would like to introduce the higher-order CIP-FEM which offers significant advantages in reducing pollution errors.

It is noteworthy that most error analyses in the existing literature, including most of the references mentioned above, focus on the Helmholtz equation \eqref{eq:Helm} with the impedance boundary condition or the Dirichlet-to-Neumann (DtN) boundary condition.
There has been a paucity of research specifically addressing the Helmholtz problem with PML boundary condition.
In our previous work \cite{lw19}, we considered the Helmholtz equation truncated by the PML with constant PML parameter, and gave a wave-number-explicit analysis of the $h$-version of both the linear FEM and CIP-FEM, showing that the preasymptotic error for PML problem is the same as that for the Helmholtz problems with the impedance or DtN boundary, i.e., the estimate $C_1kh+C_2k^3h^2$ is obtained provided $k^3h^2$ is small enough.
In \cite{cgn+22}, the authors explored the Helmholtz equation with a smooth, star-shaped obstacle and PML truncation, offering a preasymptotic error estimate $C_1 kh + C_2 k^{2p+1}h^{2p}$ for the higher-order FEM under the condition that $k^{p+2}h^{p+1}$ is small enough. 
The findings presented in \cite{cgn+22} heavily depend on the assumption that the PML parameter satisfied $\sigma \sim 1/k$, and they are not applicable when $\sigma$ significantly exceeds $1/k$. 
Recently, the authors of \cite{gs23} extended the ``elliptic-projection'' argument, originally proposed in \cite{zw13} for Helmholtz problem with impedance boundary, to a broad range of abstract Helmholtz-type problems. They established preasymptotic error estimates for $h$-FEM by assuming that $(hk)^{2p}C_{\rm sol}$ is sufficiently small, with $C_{\rm sol}$ representing the norm of the solution operator from $L^2$ to $H^2$. Additionally, it is proved in \cite{gls21} that $C_{\rm sol} \lesssim k$ when the PML scaling function belongs to a $C^3$ class.

In this paper, a continuation of our previous work \cite{lw19}, we study the higher-order FEM and CIP-FEM for the Helmholtz equation truncated by PML technique with constant PML parameter, which is not covered by the theories in \cite{gls21} and \cite{gs23}. 
The major contributions of this paper are listed as follows.
\begin{enumerate}[(i)]
  \item Inspired by the recent work \cite{gs23}, we re-prove the decomposition lemma for the truncated PML solution by using a novel technique; see Lemma \ref{lem:decomp}. This method, based on a truncation on the eigenfunction expansion of the PML differential operator, offers a simpler approach compared to the previous methods like the Fourier technique in \cite{ms10,ms11}, the semi-classical method in \cite{gls+21,gls+22,lsw22}, and the iterative technique in \cite{cn20a,cgn+22,gs23}. 
  \item We establish the preasymptotic error estimates in the energy norm of the form $C_1(kh)^p + C_2k(kh)^{2p} +C_3\mathcal{E}^{\rm PML}$ for both $p$-th order FEM and CIP-FEM under the mesh condition that $k^{2p+1}h^{2p}$ is sufficiently small, where $\mathcal{E}^{\rm PML}$ is the PML truncation error which is exponentially small (see \eqref{eq:err-PML} below). 
  This enhances the results of \cite{cgn+22} and extends those of \cite{lw19}; see Theorem \ref{thm:err:FEM}. Notably, this marks the first error estimate for higher-order CIP-FEM applied to the Helmholtz equation with PML.
  \item In our error estimate, we explicitly illustrate the dependences on the source $f$ for both the coefficients $C_1$ and $C_2$. 
  The dependences in our estimate are weaker than the previous result (see e.g., \cite{gs23,dw15}). 
  In particular, we remark that the coefficient of the pollution term $O(k^{2p+1}h^{2p})$ depends only on the $L^2$-norm of $f$; see Remark \ref{rmk:err:FEM}.
\end{enumerate}

The outline of this paper is as follows. In Section \ref{sec:preliminary}, we introduce the PML model problem and the existing convergence result for PML truncation. Section \ref{sec:PML} presents a rigorous analysis of the wave-number-explicit decomposition lemma for the PML problem. Section \ref{sec:CIP-FEM} is dedicated to the preasymptotic error estimate for CIP-FEM, as well as FEM. In Section \ref{sec:test}, we provide some numerical examples to validate our theoretical findings.

\section{Preliminaries} \label{sec:preliminary}

\subsection{Notations}
Throughout the paper, $C$  is used to denote a generic positive constant which is independent of $h$, $k$, and $f$, but may depend on $p$. We also use the shorthand notations $A \lesssim B$ and $B\lesssim A$ for the inequality $A \leq C B$ and $B\geq C A$. $A\eqsim B$ is a shorthand notation for the statement $A\lesssim B$ and $B\gtrsim  A$.  The standard space, norm, and inner product notation are adopted (see, e.g., \cite{bs08,ciarlet78}). In particular, we denote by $(\cdot,\cdot)_G$ the inner products on the complex-valued Hilbert spaces $L^2(G)$ for any domain $G\subset\bbR^d$. For simplicity, we denote $(\cdot,\cdot) = (\cdot,\cdot)_\mcD$. 
Moreover, for any disjoint domains $G_1$ and $G_2$, the piecewise Sobolev space is defined by
\[
  H^m(G_1\cup G_2) := \{v: v|_{G_1} \in H^m(G_1), \;\; v|_{G_2} \in H^m(G_2)\}
\]
with the norm 
\[
  \|v\|_{H^m(G_1\cup G_2)} := \big(\|v\|_{H^m(G_1)}^2 + \|v\|_{H^m(G_2)}^2\big)^{1/2}
\]
and semi-norm
\[
  |v|_{H^m(G_1\cup G_2)} := \big(|v|_{H^m(G_1)}^2 + |v|_{H^m(G_2)}^2\big)^{1/2}.
\]

\subsection{The approximate PML equation}
It is well known that the PML equation can be viewed as a complex coordinate stretching of the original Helmholtz scattering problem \eqref{eq:Helm} (see, e.g., \cite{cm98}). For simplicity, we consider the radial PML with constant absorbing coefficient. Let
\[
  \tilde{r} := \int_0^r \alpha(s) \diff s = r \beta(r) \;\;\mbox{with}\;\; \alpha(r) = 1 + \mbi \sigma(r) \;\;\mbox{and}\;\;\beta(r) = 1 + \mbi\delta(r),
\]
where 
\begin{equation} \label{eq:medium}
  \sigma (r)=
  \begin{cases}
  0, & r \leq R, \\
  \sigma_0,  & r>R,
  \end{cases}
  \qquad 
  \delta(r)=
  \begin{cases}
  0, & r \leq R, \\
  \frac{\sigma_0(r-R)}{r},  & r>R, 
  \end{cases}
\end{equation}
with the PML parameter $\sigma_0 > 0$ being a constant. 
We remark that the medium property $\sigma$ is set as a constant in PML here to simplify the theoretical analysis, while it is possible to employ a variable PML medium properties in practice.
By replacing the radial coordinate $r$ with $\tilde{r}$ for the Helmholtz equation \eqref{eq:Helm} and truncating the PML solution at some $\hat R > R$, we obtain the approximate PML problem, which is written by
\begin{alignat}{2}
  -\Div (A\nabla \hu) - B k^2 \hu &= f &\quad&\mbox{in } \mathcal{D}:=\mathcal{B}_{\hat R}, \label{eq:PML1}\\
  \hu&= 0 &&\mbox{on } \hat\Gamma:=\partial \mathcal{D}, \label{eq:PML2}
\end{alignat}
where the matrix function $A(x)$ and scalar function are given by
\begin{align*}
  A=HDH^{\rm T},\quad  B=\alpha(r)\beta^{d-1}(r),
\end{align*}
with the matrices $D$ and $H$ given by 
\[
\begin{array}{cc}
D=\frac{1}{\alpha(x)},~H=1 & \text{for } d=1,\\
D=\begin{pmatrix}
  \frac{\beta(r)}{\alpha(r)} & 0 \\
  0 & \frac{\alpha(r)}{\beta(r)}	
  \end{pmatrix},~
H=\begin{pmatrix}
  \cos\theta & -\sin\theta \\
  \sin\theta & \cos\theta
  \end{pmatrix} & \text{for } d=2,\\
D=\begin{pmatrix}
  \frac{\beta^2(r)}{\alpha(r)} & 0    & 0    \\
  0                       & \alpha(r) & 0    \\
  0                       & 0         & \alpha(r)
  \end{pmatrix},~
H=\begin{pmatrix}
  \sin\theta\cos\varphi & \cos\theta\cos\varphi & -\sin\varphi \\
  \sin\theta\sin\varphi & \cos\theta\sin\varphi & \cos\varphi \\
  \cos\theta            & -\sin\theta           & 0 
  \end{pmatrix} & \text{for } d=3\,.
\end{array}
\]
Denote the thickness of PML by $L:=\hat R - R$. The variational formulation of the PML problem \eqref{eq:PML1}--\eqref{eq:PML2} reads as: find $\hu \in H_0^1(\mcD)$ such that
\begin{equation}\label{eq:vp}
  a(\hu, v) = (f, v) \quad\forall  v \in H_0^1(\mcD),
\end{equation}
where the sesquilinear form $a$ is defined by
\begin{equation}
  a(u,v):= (A\nabla u, \nabla v) - k^2(Bu, v). \label{eq:auv}
\end{equation}
Some direct calculations (cf. \cite[\S 2.1]{jlw+22}) yield
\begin{alignat}{2}
  |(A \bsq, \bsr)| &\lesssim \|\bsq\|_{L^2(\mcD)} \|\bsr\|_{L^2(\mcD)},\;\; \Re (A \bsr, \bsr) = \Re (\bsr, A \bsr) \eqsim \|\bsr\|_{L^2(\mcD)}^2 &\quad & \forall  \bsq, \bsr \in [L^2(\mcD)]^d, \label{eq:Aqr} \\
  |(B v,w)| &\lesssim \|v\|_{L^2(\mcD)} \|w\|_{L^2(\mcD)},\;\; 
  (1-\tilde c)\|v\|_{L^2(\mcD)}^2 \leq \Re (Bv,v) \leq \|v\|_{L^2(\mcD)}^2 &\quad &\forall  v,w \in L^2(\mcD), \label{eq:Bvw}
\end{alignat}
where $\tilde{c} \geq 0$ is a constant depending only on $\sigma_0$. 
For further analysis, we define the energy norm for $v \in H^1(\mcD)$ by
\begin{equation}
  \TV{v} := \big(\|\nabla v\|_{L^2(\mcD)}^2 + k^2 \|v\|_{L^2(\mcD)}^2\big)^{1/2}. \label{eq:e-norm}
\end{equation}
By the continuities in \eqref{eq:Aqr}--\eqref{eq:Bvw}, there holds
\begin{align}\label{est:a:cont}
  |a(v,w)| \lesssim \TV{v} \TV{w} \quad\forall  v, w \in H^1(\mcD).
\end{align}

The fundamental analysis shows that the PML solution $\hu$ converges exponentially to the scattering solution $u$ when the thickness $L$ of PML or the PML parameter $\sigma_0$ tends to infinity. The following convergence result is proved in \cite[Theorem 3.7]{lw19}.
\begin{lemma}\label{lem:conv}
  Suppose $R \eqsim \hat R\eqsim 1$. Assume that $k\sigma_0 L \geq 1$ for $d=1$ and that
  \begin{equation}\label{PML:cond}
    kR \ge 1 \;\;\mbox{and}\;\; k\sigma_0 L \geq \max\{2kR+\sqrt{3} k L, \, 10\} \quad \mbox{for}\; d\in\{2,3\}.
  \end{equation}
  The solution $\hu \in H_0^1(\mcD)$ to the PML problem \eqref{eq:vp} uniquely exists and satisfies
  \[
    k\|u-\hu\|_{L^2(\Omega)} + \|\nabla(u-\hu)\|_{L^2(\Omega)} \lesssim \mcE^{\rm PML} \|u\|_{{H^{1/2}(\Gamma)}},
  \]
  where the coefficient $\mathcal{E}^{\rm PML}$ is exponentially small  and given by
  \begin{equation}\label{eq:err-PML}
    \mcE^{\rm PML} = 
    \begin{cases}
      k e^{-2 k \sigma_0 L}, & d=1, \\
      k^5 e^{-2 k \sigma_0 L\big(1-\frac{R^2}{\hat R^2+\sigma_0^2 L^2}\big)^{1/2}}, & d\in\{2,3\}.
    \end{cases}
  \end{equation}
\end{lemma}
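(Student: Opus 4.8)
The plan is to reduce the exterior comparison to the artificial boundary $\Gamma$ and then to estimate, mode by mode, the defect between the exact and the PML-truncated Dirichlet--Neumann (DtN) maps. Since $f$ is supported in $\Omega$ and the PML coefficients satisfy $A=I$, $B=1$ there, both $u$ and $\hu$ solve $-\Delta v - k^2 v = f$ in $\Omega$; hence $w := u - \hu$ is a homogeneous Helmholtz solution in $\Omega$. The exact scattering solution satisfies the outgoing relation $\partial_n u = \mcT u$ on $\Gamma$ with the exact DtN operator $\mcT$, while $\hu$ satisfies $\partial_n \hu = \mcT^{\rm PML}\hu$, where $\mcT^{\rm PML}$ is the DtN operator of the truncated PML problem on $\mcB_{\hat R}\setminus\Omega$. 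Subtracting gives $\partial_n w - \mcT w = g$ on $\Gamma$ with the defect $g := (\mcT - \mcT^{\rm PML})\hu|_\Gamma$. Thus $w$ solves an interior Helmholtz problem with zero source and boundary data $g$, and the proof splits into (a) a wave-number-explicit stability estimate $\TV{w}_\Omega \lesssim k\,\|g\|_{H^{-1/2}(\Gamma)}$ for the interior Helmholtz--DtN problem, which is classical and coercive-plus-compact, and (b) a proof that $\mcT - \mcT^{\rm PML}$ is exponentially small, together with a short absorption step bounding $\|\hu\|_{H^{1/2}(\Gamma)}$ by $\|u\|_{H^{1/2}(\Gamma)}$ once the defect is known to be small.

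Next I would diagonalize both operators by expanding the boundary data in eigenfunctions of the Laplace--Beltrami operator on $\partial\mcB_1$: the Fourier modes $e^{\mbi n\theta}$ for $d=2$ and spherical harmonics for $d=3$, the case $d=1$ being explicit. For each mode the radial part of the exact problem is a Bessel-type equation whose outgoing solution is $H_n^{(1)}(kr)$ (a spherical Hankel function for $d=3$), giving the symbol $\mcT_n = k\,H_n^{(1)\prime}(kR)/H_n^{(1)}(kR)$. Because the PML is the complex stretching $r \mapsto \tilde r = r\beta(r)$, the radial PML equation for each mode has the two solutions $H_n^{(1)}(k\tilde r)$ and $H_n^{(2)}(k\tilde r)$; imposing $\hu = 0$ at $r=\hat R$ selects the linear combination vanishing there, so that $\mcT^{\rm PML}_n$ equals $\mcT_n$ plus a ``reflection'' correction proportional to $H_n^{(2)}(k\tilde{\hat R})/H_n^{(1)}(k\tilde{\hat R})$ times propagation factors evaluated between $R$ and $\hat R$. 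Since $\tilde{\hat R} = \hat R + \mbi\sigma_0 L$ has a large positive imaginary part, $H_n^{(1)}$ grows while $H_n^{(2)}$ decays exponentially in $k\sigma_0 L$, and this is precisely the mechanism producing $\mcE^{\rm PML}$.

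The main obstacle is the \emph{uniform-in-$n$}, wave-number-explicit control of these Hankel ratios, especially for large mode order $n$ where one leaves the oscillatory regime and must invoke the uniform (Debye) asymptotics of $H_n^{(1,2)}$ at the complex argument $k\tilde{\hat R}$ of magnitude $k\sqrt{\hat R^2 + \sigma_0^2 L^2}$. Tracking the imaginary phase accumulated along the complex ray and optimizing the worst case over $n$ (the slowest-decaying, near-grazing modes with $n\sim kR$) is what produces the geometric factor $\big(1 - R^2/(\hat R^2 + \sigma_0^2 L^2)\big)^{1/2}$ in the exponent of \eqref{eq:err-PML}; the thresholds $kR\ge 1$ and $k\sigma_0 L \ge \max\{2kR + \sqrt{3}\,kL,\,10\}$ are exactly what is needed for these asymptotics to hold uniformly and with the correct sign of the decay. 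Finally I would sum the modal estimates by Parseval to bound $\|g\|_{H^{-1/2}(\Gamma)}$ by $\mcE^{\rm PML}\|u\|_{H^{1/2}(\Gamma)}$, feed this into the interior stability estimate of step (a), and observe that the same modal injectivity — no nonzero mode can simultaneously satisfy the PML equation and the homogeneous Dirichlet condition under the stated thresholds — yields uniqueness, whence existence follows from the Fredholm alternative applied to \eqref{eq:vp}.
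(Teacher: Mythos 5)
You are reviewing this against a statement the paper itself does not prove in-house: Lemma \ref{lem:conv} is imported verbatim from \cite[Theorem 3.7]{lw19}, whose proof proceeds by separation of variables in the radial geometry. There, one first establishes wave-number-explicit well-posedness and stability of the truncated PML problem by solving each Fourier/spherical-harmonic mode explicitly (this is also the source of Lemma \ref{lem:stab}), and then compares $\hu$ mode by mode with the complex-stretched (PML) extension of the outgoing solution $u$, whose modal coefficients involve ratios of Hankel functions at the complex radius $k\tilde{\hat R}$; uniform-in-order bounds of Chen--Liu type \cite{cl05} for $H_\nu^{(1)}$ at complex argument produce exactly the geometric factor $\bigl(1-R^2/(\hat R^2+\sigma_0^2L^2)\bigr)^{1/2}$ and the thresholds \eqref{PML:cond}. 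Your reduction is a genuinely different packaging of the same analytic core: instead of (truncated-PML stability) $+$ (decaying PML extension), you write $\partial_n(u-\hu)-\mcT(u-\hu)=(\mcT-\mcT^{\rm PML})\hu$ on $\Gamma$ and trade the global PML stability for (i) a $k$-explicit interior Helmholtz--DtN stability estimate and (ii) well-posedness of the annular PML problem needed to define $\mcT^{\rm PML}$ --- both again modal computations here, with the same worst-case near-grazing modes $n\sim kR$. Your route is more modular (it would survive replacing the ball by an obstacle for which DtN bounds are known), while the cited route delivers Lemma \ref{lem:stab} as a by-product, which this paper needs anyway.

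Two steps would fail as written and need repair. First, your Hankel asymptotics at $k\tilde{\hat R}=k(\hat R+\mbi\sigma_0 L)$ are reversed: since $H_n^{(1)}(z)\sim\sqrt{2/(\pi z)}\,e^{\mbi(z-n\pi/2-\pi/4)}$, it is $H_n^{(1)}$ that \emph{decays} like $e^{-\Im z}$ and $H_n^{(2)}$ that \emph{grows} as $\Im z\to+\infty$ --- this is precisely why the stretched outgoing wave is absorbed in the layer. Consequently the Dirichlet condition at $r=\hat R$ selects the reflection coefficient $\rho_n=H_n^{(1)}(k\tilde{\hat R})/H_n^{(2)}(k\tilde{\hat R})$, not its reciprocal as in your proposal; note also that the accompanying ``propagation factor'' $H_n^{(2)}(kR)/H_n^{(1)}(kR)$ has modulus one for $n\lesssim kR$ (complex conjugates at real argument), so the defect is $O(|\rho_n|)$ and exponentially small only with the ratio oriented correctly --- as literally written, your correction term is exponentially \emph{large}. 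Second, ``coercive plus compact'' yields well-posedness of the interior DtN problem but gives no wave-number-explicit constant; to justify $\TV{w}\lesssim k\|g\|_{H^{-1/2}(\Gamma)}$ you must invoke a Morawetz/Rellich multiplier argument (available since $\mcB_R$ is star-shaped and $\mcT$ has the requisite sign properties) or, in this radial setting, simply bound each mode explicitly. Finally, full uniform Debye asymptotics are heavier than necessary: the integral-representation/monotonicity bounds of \cite{cl05} suffice uniformly in $n$, and they are in fact where the thresholds \eqref{PML:cond} come from in the cited proof.
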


\section{Analyses of PML problem}\label{sec:PML}
For convenience, we denote the PDE operator to the truncated PML problem by
$$\mcL w := -\Div (A\nabla w)-Bk^2 w$$ 
for any $w \in H_0^1(\mcD)$. Clearly,
\eqn{\Dp{\mcL w,v}=a(w,v)\quad\forall w,v\in H_0^1(\mcD).}
In this section, we first consider the PML problem with a source $g \in L^2(\mcD)$ satisfying $\supp g \subset\mathcal{D}$, which is
\begin{align}
  \mcL w =g \;\;\text{in}\; \mcD; \quad w=0  \;\;\text{on}\; \hat\Gamma. \label{eq:PML3} 
\end{align}

\subsection{Stability and higher regularity}
The following stability result is proved in \cite[Theorem 3.1, Corollaries 3.4 and 3.9]{lw19}.
\begin{lemma}\label{lem:stab}
  Under the conditions of Lemma \ref{lem:conv},
  let $w$ be the solution to \eqref{eq:PML3}, 
  it holds
  \begin{equation}\label{eq:stab}
    k \|w\|_{L^2(\mcD)} + \|w\|_{H^1(\mcD)} + k^{-1} \|w\|_{H^2(\Omega \cup \hat\Omega)} \leq C_{\rm stab} \|g\|_{L^2(\mcD)},
  \end{equation}
  where the stability constant $C_{\rm stab}$ is independent of the wave number $k$.
\end{lemma}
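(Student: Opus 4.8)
The plan is to establish the three bounds in \eqref{eq:stab} in sequence: the $H^1$ bound conditional on the $L^2$ bound via a G{\aa}rding-type inequality, then the wave-number-explicit $L^2$ stability $k\|w\|_{L^2(\mcD)}\lesssim\|g\|_{L^2(\mcD)}$ via a Rellich--Morawetz multiplier argument, and finally the piecewise $H^2$ bound by local elliptic regularity with careful tracking of the $k$-dependence. Well-posedness of \eqref{eq:PML3} is already guaranteed under the hypotheses inherited from Lemma \ref{lem:conv}, so the task is purely to make the a priori estimates explicit in $k$.

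First I would test the weak form with $v=w$. The real part of $a(w,w)=(g,w)$, together with \eqref{eq:Aqr}--\eqref{eq:Bvw}, gives $\|\nabla w\|_{L^2(\mcD)}^2 \lesssim k^2\|w\|_{L^2(\mcD)}^2 + \|g\|_{L^2(\mcD)}\|w\|_{L^2(\mcD)}$, so the gradient is controlled once the $L^2$ norm is. The imaginary part, exploiting the sign-definite imaginary parts of $A$ and $B$ (which are supported in the PML region $\hat\Omega$, since $\alpha=\beta=1$ on $\Omega$), yields the dissipation of the absorbing layer and bounds $w$ and $\nabla w$ restricted to $\hat\Omega$ in terms of $\|g\|_{L^2(\mcD)}\|w\|_{L^2(\mcD)}$. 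Neither closes the estimate, since both still cost a factor $k$ on the physical part $\Omega$.

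The crux---and the main obstacle---is the $L^2$ stability $k\|w\|_{L^2(\mcD)}\lesssim\|g\|_{L^2(\mcD)}$, which I would obtain from a Rellich/Morawetz identity. Concretely I would test $\mcL w=g$ against a radial multiplier of the form $\overline{\,2\,\tilde x\cdot\nabla w+(d-1)\beta_0 w\,}$, adapted to the stretched coordinate $\tilde r$, with $\beta_0$ a real constant chosen to produce a positive multiple of $k^2\|w\|^2$. Integration by parts yields, in the physical region $\Omega$ where the coefficients are real ($A=I$, $B=1$), the classical identity delivering $k^2\|w\|_{L^2(\Omega)}^2+\|\nabla w\|_{L^2(\Omega)}^2$ up to boundary contributions, while in $\hat\Omega$ the complex PML coefficients contribute terms that, after taking real parts, carry the favourable sign of the layer. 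The delicate points are: the multiplier $\tilde x\cdot\nabla w$ does not lie in $H_0^1(\mcD)$, so the outer boundary $\hat\Gamma$ produces a term that must be shown to have the right sign (using $w|_{\hat\Gamma}=0$ and the fact that $\mcD=\mcB_{\hat R}$ is star-shaped); the coefficients jump across $\Gamma$ (where $\sigma$ is discontinuous), so the interface terms at $r=R$ must be collected and controlled; and all contributions must be bounded with explicit powers of $k$ and absorbed, using the $\hat\Omega$-dissipation from the imaginary-part test, to close the bootstrap and reach $k\|w\|_{L^2(\mcD)}+\|w\|_{H^1(\mcD)}\lesssim\|g\|_{L^2(\mcD)}$.

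Finally, for the piecewise $H^2$ bound I would apply elliptic regularity on each subdomain separately, which is exactly why the estimate is stated on $\Omega\cup\hat\Omega$ rather than on $\mcD$: the coefficients $A,B$ are smooth inside $\Omega$ and inside $\hat\Omega$ but jump across $\Gamma$, precluding global $H^2$ regularity. On $\Omega$ the equation reads $-\Delta w=g+k^2w$ on the smooth domain $\mcB_R$, so $\|w\|_{H^2(\Omega)}\lesssim\|g\|_{L^2(\Omega)}+k^2\|w\|_{L^2(\Omega)}+(\text{trace terms on }\Gamma)$; on $\hat\Omega$ the variable-coefficient operator $\mcL$, whose principal part is smooth and $k$-independent, gives the analogous bound with the $k^2w$ term again dominant. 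Inserting the already-established $k\|w\|_{L^2(\mcD)}+\|w\|_{H^1(\mcD)}\lesssim\|g\|_{L^2(\mcD)}$ makes every right-hand side $\lesssim k\|g\|_{L^2(\mcD)}$, whence $k^{-1}\|w\|_{H^2(\Omega\cup\hat\Omega)}\lesssim\|g\|_{L^2(\mcD)}$ with a stability constant $C_{\rm stab}$ independent of $k$.
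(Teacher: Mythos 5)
Your outline hinges entirely on the Rellich--Morawetz step, and that step is precisely what fails for this PML. With the constant parameter \eqref{eq:medium}, the stretching derivative $\alpha$ jumps across $\Gamma$, so the coefficients of $\mcL$ are only piecewise smooth; testing against $2\tilde x\cdot\nabla\bar w+(d-1)\beta_0\bar w$ and integrating by parts then produces, besides the volume terms, interface integrals on $\Gamma$ that are quadratic in $\partial_n w$ and $\nabla_T w$ with coefficients given by the jumps of the complex matrix $A$. In the smooth-coefficient case such contributions become volume terms bounded by derivatives of $A$; here they carry no sign and cannot be absorbed --- this is exactly why the multiplier approach to PML stability in \cite{gls21} requires a $C^3$ scaling function, and why the introduction of this paper stresses that the piecewise-constant case ``is not covered by the theories in \cite{gls21} and \cite{gs23}.'' Your preliminary dissipation claim is also incorrect for $d\in\{2,3\}$: the imaginary part of $A$ is \emph{indefinite} in the layer. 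For $d=2$ the diagonal entries of $D$ are $\beta/\alpha$ and $\alpha/\beta$ with $\Im(\beta/\alpha)=(\delta-\sigma_0)/(1+\sigma_0^2)<0$ while $\Im(\alpha/\beta)=(\sigma_0-\delta)/(1+\delta^2)>0$, so $\Im\, a(w,w)$ does not control $\|\nabla w\|_{L^2(\hat\Omega)}$, and even the term $k^2\int \Im B\,|w|^2$ (where $\Im B\geq 0$) cannot be isolated without first controlling the gradient contribution. Consequently the core bound $k\|w\|_{L^2(\mcD)}\lesssim\|g\|_{L^2(\mcD)}$ is not established by your argument, and I see no way to repair the multiplier identity without smoothing the scaling function, which would change the problem.

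For comparison, the paper does not reprove Lemma \ref{lem:stab} at all: it quotes it from \cite{lw19} (Theorem 3.1, Corollaries 3.4 and 3.9), where the stability of this piecewise-constant radial PML is obtained by modal analysis rather than a multiplier identity --- one expands in circular/spherical harmonics, solves the radial equation in the layer explicitly in terms of Hankel functions of the complex-stretched argument $k\tilde r$, estimates the resulting Dirichlet-to-Neumann map of the truncated layer on $\Gamma$ uniformly in the mode index and in $k$, and compares it with the exact exterior DtN map, whose known wave-number-explicit stability theory then yields \eqref{eq:stab}; the condition \eqref{PML:cond} enters exactly at this point. Your third step is fine: reading \eqref{eq:PML3} as a transmission problem with right-hand side $g+k^2Bw$ and applying the shift estimate of Lemma \ref{lem:high-regularity} gives $\|w\|_{H^2(\Omega\cup\hat\Omega)}\lesssim\|g\|_{L^2(\mcD)}+k^2\|w\|_{L^2(\mcD)}$, which is essentially how the $H^2$ part of \eqref{eq:stab} is derived --- but it presupposes the $L^2$ bound that your multiplier step fails to deliver.
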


To derive the higher regularity estimates for the PML problem, we first introduce the following result for the elliptic equation (see \cite[Theorem 1.1]{babuska70}).

\begin{lemma}\label{lem:high-regularity}
  Let $z$ be the solution to the following elliptic equation
  \[
    -\Div (A \nabla z) = g \;\;\mbox{in }\mcD;\quad
    z = 0 \;\;\mbox{on }\hat\Gamma.
  \]
  Suppose that $g \in H^m(\OchO)$, then $z \in H^{m+2}(\OchO)$ and satisfies the regularity estimate
  \begin{equation}\label{eq:high-regularity}
    \| z \|_{H^{m+2}(\OchO)} \lesssim 
    \| g \|_{H^{m}(\OchO)}.
  \end{equation}
\end{lemma}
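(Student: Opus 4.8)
The plan is to treat this as a standard elliptic regularity result for a divergence-form operator whose coefficient matrix $A$ is piecewise smooth, discontinuous only across the smooth interface $\Gamma$, and to track carefully that the global solution is merely $H^1$ there while being smooth on each side---which is exactly why the conclusion lives in the piecewise space $H^{m+2}(\OchO)$. First I would record the structural facts to be used: in $\Omega$ one has $\alpha\equiv\beta\equiv1$, so $A=I$ and the equation reduces to $-\Delta z=g$; in the PML region $\hat\Omega:=\mcD\setminus\overline\Omega$ the entries of $A=HDH^{\rm T}$ are $C^\infty$ (indeed real-analytic in the radial variable) up to both $\Gamma$ and $\hat\Gamma$; and both $\Gamma$ and $\hat\Gamma$ are smooth spheres. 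The real-part bound $\Re(A\bsr,\bsr)\eqsim\|\bsr\|_{L^2(\mcD)}^2$ from \eqref{eq:Aqr} supplies uniform (complex) ellipticity, and together with the continuity recorded there it makes the form $(A\nabla\cdot,\nabla\cdot)$ coercive on $H_0^1(\mcD)$ via Poincar\'e--Friedrichs. Lax--Milgram then yields existence, uniqueness, and the base energy estimate $\|z\|_{H^1(\mcD)}\lesssim\|g\|_{L^2(\mcD)}$, which provides the lowest-order control feeding the regularity bootstrap below.

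Next I would establish the regularity away from $\Gamma$, localizing by a partition of unity. In the interior of $\Omega$ the constant-coefficient estimate for $-\Delta$ gives $H^{m+2}_{\rm loc}$ control; in the interior of $\hat\Omega$ the same follows from the classical interior theory for the smooth, uniformly elliptic operator $-\Div(A\nabla\cdot)$. Near the outer boundary $\hat\Gamma$ I would invoke boundary regularity for the Dirichlet problem on the smooth sphere $\hat\Gamma$. In each case a difference-quotient/bootstrap argument produces a local bound of the form $\|z\|_{H^{m+2}}\lesssim\|g\|_{H^m}+\|z\|_{H^{m+1}}$, and the lower-order term is absorbed by the inductive hypothesis (which controls $\|z\|_{H^{m+1}(\OchO)}$ by $\|g\|_{H^{m-1}(\OchO)}\le\|g\|_{H^m(\OchO)}$) together with the energy estimate at the base level.

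The crux---and the step I expect to be the main obstacle---is regularity across the interface $\Gamma$, where $A$ jumps. The weak formulation $(A\nabla z,\nabla v)=(g,v)\ \forall v\in H_0^1(\mcD)$ encodes the transmission conditions $[z]_\Gamma=0$ (since $z\in H^1(\mcD)$) and $[A\nabla z\cdot n]_\Gamma=0$ (continuity of the conormal flux, as $g$ carries no singular part on $\Gamma$). The subtlety is that the \emph{full} normal derivative of $z$ need not match across $\Gamma$, so one may not differentiate freely in the normal direction and cannot expect more than piecewise regularity. I would flatten $\Gamma$ in local charts (using that it is a sphere) and estimate the tangential derivatives first: tangential difference quotients commute with both transmission conditions, so the tangential derivatives of $z$ up to order $m+1$ solve the same transmission problem and are controlled by $\|g\|_{H^m}$ plus lower order. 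The missing top-order normal derivatives are then recovered algebraically from the equation, solving $-\Div(A\nabla z)=g$ for $\partial_n^2 z$ in terms of the already-bounded tangential and mixed derivatives, using that the normal--normal entry of $A$ is bounded below by ellipticity. Iterating in $m$ closes the interface estimate on each side of $\Gamma$.

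Finally I would patch the interior, outer-boundary, and interface estimates with the partition of unity and absorb all lower-order contributions via the base energy bound, obtaining $\|z\|_{H^{m+2}(\OchO)}\lesssim\|g\|_{H^m(\OchO)}$; since the operator $-\Div(A\nabla\cdot)$ contains no $k$, the implied constant is $k$-independent. As this is precisely the content of the cited transmission-regularity theorem of Babu\v{s}ka, in practice I would verify its hypotheses---uniform ellipticity, piecewise-smooth coefficients, smooth interface $\Gamma$, and smooth outer boundary $\hat\Gamma$---rather than redo the full bootstrap.
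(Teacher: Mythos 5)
Your proposal is correct and ends up exactly where the paper does: the paper offers no proof of this lemma at all, simply citing Babu\v{s}ka's transmission-regularity theorem (\cite[Theorem 1.1]{babuska70}), and your closing step of verifying its hypotheses (uniform ellipticity via $\Re(A\bsr,\bsr)\eqsim\|\bsr\|_{L^2(\mcD)}^2$, piecewise-smooth coefficients, smooth interface $\Gamma$ and smooth outer boundary $\hat\Gamma$) is precisely the paper's route. Your additional sketch of the underlying bootstrap---tangential difference quotients across the flattened interface, then algebraic recovery of $\partial_n^2 z$ from the equation using the invertibility of the normal--normal entry of $A$---is a sound account of why the conclusion lives in the piecewise space $H^{m+2}(\OchO)$ rather than $H^{m+2}(\mcD)$.
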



\subsection{The elliptic operator}

Inspired by the insights of the recent study in \cite{gs23}, we start this section by introducing a truncated operator. This operator is constructed through a linear combination of a finite number of eigenfunctions related to a symmetrized version of the PDE operator of the truncated PML problem \eqref{eq:PML3}. 
Subsequently, we add some multiple of this truncated operator to the original PDE operator, ultimately yielding a positive definite elliptic operator.

Denote the real parts of $A$ and $B$ by $A_r(x) = \Re A(x)$ and $B_r(x) =\Re B(x)$, respectively. We introduce the elliptic operator $K: H_0^1(\mcD) \to H^{-1}(\mcD)$ defined by
\[
  \Dp{Ku,v} := (A_r \nabla u, \nabla v) - k^2(B_r u,v) \quad \forall v \in H_0^1(\mcD).
\]
Since $A_r$ is symmetric and positive definite (see \eqref{eq:Aqr}), by the spectral theorem (see, e.g., \cite{mclean00}), we write the eigenvalues and the associated eigenfunctions of $K$ by 
\[
  \lambda_1 \le \lambda_2 \le \cdots \quad\text{and}\quad \phi_1, \phi_2, \cdots,
\]
i.e., $K \phi_j = \lambda_j \phi_j$ in $H^{-1}(\mcD)$, where $\phi_j$ satisfies
\begin{equation}\label{eq:phi}
  -\Div(A_r \nabla \phi_j) - k^2 B_r \phi_j = \lambda_j \phi_j \;\;\mbox{in}\;D; \quad \phi_j = 0 \;\;\mbox{on}\;\hat\Gamma \quad \mbox{for } j=1,2,\cdots.
\end{equation}
In addition, $\{\phi_j\}_{j\geq 1}$ forms an orthonormal basis of $L^2(\mcD)$. 
By \eqref{eq:Aqr}--\eqref{eq:Bvw}, it holds
\eqn{
  \lambda_j \|\phi_j\|_{L^2(\mcD)}^2 = \Dp{K \phi_j, \phi_j} &= (A_r \nabla \phi_j, \nabla \phi_j) - k^2(B_r \phi_j, \phi_j) 
  \geq - k^2 \|\phi_j\|_{L^2(\mcD)}^2,
}
which yields
\eq{\label{eq:lower-bound}
  \lambda_j \geq -k^2 \quad\text{for all } j = 1, 2, \cdots. 
}
Let $N \ge 1$ be the integer satisfying
\eq{\label{eq:N}
  \lambda_{N} \leq 2 k^2 < \lambda_{N+1}. 
}
For any $u \in L^2(\mcD)$, there exists an expansion 
\[
  u = \sum_{j=1}^\infty u_j \phi_j \quad \mbox{with} \quad u_j = (u,\phi_j).
\]
We define the truncated operator $T_N$ by
\eq{\label{eq:TN}
  T_N u := \sum_{j=1}^N u_j \phi_j.
}
The following lemma says that $T_N$ is a bounded regularizing operator from $L^2(\mcD)$ to $H^m(\OchO)$ for any integer $m\geq 0$.
\begin{lemma}\label{lem:TN}
  For any $m=0,1,\cdots$, it holds
  \eq{\label{TNu}
    \|T_N u\|_{H^m(\OchO)} \lesssim k^m \|u\|_{L^2(\mcD)} \quad\forall u\in L^2(\mcD).}
\end{lemma}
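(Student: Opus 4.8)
The plan is to prove \eqref{TNu} by induction on $m$, gaining two orders of regularity at each step through the elliptic regularity Lemma \ref{lem:high-regularity}, with the spectral cutoff \eqref{eq:N} supplying one power of $k^2$ per step. Throughout write $T_N u = \sum_{j=1}^N u_j\phi_j$ and introduce the companion operator $S_N u := \sum_{j=1}^N \lambda_j u_j \phi_j$. I would take $m=0$ and $m=1$ as base cases. For $m=0$, since $\{\phi_j\}_{j\ge1}$ is orthonormal in $L^2(\mcD)$ the map $T_N$ is an orthogonal projection, so $\|T_N u\|_{L^2(\OchO)}\le\|T_N u\|_{L^2(\mcD)}\le\|u\|_{L^2(\mcD)}$. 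For $m=1$, I would use the identity $(A_r\nabla w,\nabla w)=\Dp{Kw,w}+k^2(B_r w,w)$ with $w=T_N u$. Here $\Dp{K T_N u,T_N u}=\sum_{j=1}^N\lambda_j|u_j|^2\le 2k^2\|u\|_{L^2(\mcD)}^2$ by \eqref{eq:N}, while $(B_r T_N u,T_N u)=\Re(B\,T_N u,T_N u)\le\|u\|_{L^2(\mcD)}^2$ by \eqref{eq:Bvw}; coercivity of $A_r$ from \eqref{eq:Aqr} then gives $\|\nabla T_N u\|_{L^2(\mcD)}\lesssim k\|u\|_{L^2(\mcD)}$, and hence $\|T_N u\|_{H^1(\OchO)}\lesssim k\|u\|_{L^2(\mcD)}$ using $k\ge1$.

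The engine of the induction is that $w=T_N u$ solves an elliptic problem of exactly the form in Lemma \ref{lem:high-regularity}. Applying $-\Div(A_r\nabla\,\cdot\,)$ termwise and inserting the eigenvalue relation \eqref{eq:phi} in the form $-\Div(A_r\nabla\phi_j)=\lambda_j\phi_j+k^2B_r\phi_j$ yields
\[
  -\Div(A_r\nabla(T_N u)) = S_N u + k^2 B_r\,T_N u =: g \quad\text{in } \mcD, \qquad T_N u = 0 \quad\text{on } \hat\Gamma.
\]
Assuming \eqref{TNu} at level $m$, Lemma \ref{lem:high-regularity} gives $\|T_N u\|_{H^{m+2}(\OchO)}\lesssim\|g\|_{H^m(\OchO)}\le\|S_N u\|_{H^m(\OchO)}+k^2\|B_r T_N u\|_{H^m(\OchO)}$.

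The crucial observation for the first term is that $S_N u$ again lies in the span of $\phi_1,\dots,\phi_N$, so $T_N(S_N u)=S_N u$ and the induction hypothesis applies to $S_N u$ itself: $\|S_N u\|_{H^m(\OchO)}=\|T_N(S_N u)\|_{H^m(\OchO)}\lesssim k^m\|S_N u\|_{L^2(\mcD)}$. Since $|\lambda_j|\le 2k^2$ for $j\le N$ by \eqref{eq:lower-bound} and \eqref{eq:N}, Parseval gives $\|S_N u\|_{L^2(\mcD)}\le 2k^2\|u\|_{L^2(\mcD)}$, so this term is $\lesssim k^{m+2}\|u\|_{L^2(\mcD)}$. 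For the lower-order term, $B_r$ is smooth on each of $\Omega$ and $\hat\Omega$ with derivatives bounded independently of $k$ (it depends only on the fixed geometry and $\sigma_0$), so the product rule gives $\|B_r T_N u\|_{H^m(\OchO)}\lesssim\|T_N u\|_{H^m(\OchO)}\lesssim k^m\|u\|_{L^2(\mcD)}$, whence $k^2$ times this is again $\lesssim k^{m+2}\|u\|_{L^2(\mcD)}$. Combining closes the step, and since each step advances $m$ by two, the base cases $m=0,1$ cover all integers.

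The step I expect to be the main obstacle is the estimate for $S_N u=\sum_{j\le N}\lambda_j u_j\phi_j$: one needs a high Sobolev norm of a combination of eigenfunctions that are only weak solutions, with no independent pointwise regularity control. The resolution is precisely the self-referential identity $S_N u=T_N(S_N u)$, which lets the induction hypothesis furnish the regularity while each application costs exactly the factor $k^2$ dictated by the cutoff $\lambda_N\le 2k^2$; this matching of the spectral gap with the regularity gain is what produces the sharp power $k^m$. A secondary point to check carefully is that the constant in Lemma \ref{lem:high-regularity} and the $W^{m,\infty}(\OchO)$ bound on $B_r$ are both independent of $k$, so that no spurious powers of $k$ enter.
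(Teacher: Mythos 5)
Your proof is correct, and it shares the paper's skeleton: identical base cases $m=0,1$, the same eigenvalue identity $-\Div(A_r\nabla\phi_j)=\lambda_j\phi_j+k^2B_r\phi_j$ turning $T_Nu$ into the solution of a Dirichlet problem for $-\Div(A_r\nabla\,\cdot\,)$, and the same elliptic shift estimate (Lemma \ref{lem:high-regularity} with $A$ replaced by $A_r$) driving an induction in steps of two. Where you genuinely diverge is in how the source term is controlled: your $S_Nu$ is exactly $Kv$ with $v=T_Nu$, and the paper bounds $\|Kv\|_{H^{m-2}(\OchO)}$ by introducing the whole family $K^l v=\sum_{j\le N}\lambda_j^l u_j\phi_j$, seeding it with $\|K^lv\|_{L^2(\mcD)}\lesssim k^{2l}\|u\|_{L^2(\mcD)}$ and $\|K^lv\|_{H^1(\mcD)}\lesssim k^{2l+1}\|u\|_{L^2(\mcD)}$, and unrolling the recursion \eqref{Klv}. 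Your observation that $T_N(S_Nu)=S_Nu$, so that the induction hypothesis --- crucially quantified over \emph{all} $u\in L^2(\mcD)$ --- applies to $S_Nu$ itself with Parseval and $|\lambda_j|\le 2k^2$ supplying the factor $k^2$, absorbs that entire recursion in a single stroke; it is the same mathematical content (each application of the hypothesis to $S_Nu$ implicitly climbs one rung of the paper's $K^l$ ladder) but with tidier bookkeeping and no explicit unrolling. What the paper's version buys is an explicit display of the powers $K^l$, which makes the origin of the factor $k^{2l}$ visible; what yours buys is brevity and a cleaner logical structure, at the small cost of having to state the induction hypothesis uniformly in $u$ (which you do). The two side-conditions you flagged do hold and are implicitly used by the paper as well: the constant in Lemma \ref{lem:high-regularity} is $k$-independent because $A_r$ is, and $B_r$ equals $1$ on $\Omega$ and is smooth on $\overline{\hat\Omega}$ with $W^{m,\infty}$ bounds depending only on $\sigma_0$, $R$, $\hat R$, so multiplication by $B_r$ is bounded on $H^m(\OchO)$ uniformly in $k$.
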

\begin{proof}
  Denote $v=T_N u$.
  The case of $m=0$ is obvious by Plancherel theorem.
  When $m=1$, by \eqref{eq:Aqr}--\eqref{eq:Bvw}, the definition of $K$, 
  and \eqref{eq:N}, we have 
  \begin{align*}
    \|\nabla v\|_{L^2(\mcD)}^2 &\lesssim (A_r \nabla v, \nabla v) = \Dp{K v, v} + k^2 (B_r v, v) \\
    &\lesssim \sum_{j=1}^N \lambda_j u_j (\phi_j, v) + k^2 \|v\|_{L^2(\mcD)}^2\lesssim k^2 \|u\|_{L^2(\mcD)}^2.
  \end{align*}
  Hence, by noting $k \gtrsim 1$ we get 
  \eq{\label{vH1}\|v\|_{H^1(\mcD)} \lesssim k \|u\|_{L^2(\mcD)}.}  
  The definition of $K$ says that
  \eq{\label{Kw} -\Div (A_r \nabla w)=K w+k^2B_r w \;\;\text{for any $w\in H_0^1(\mcD)$}.}
  For $m\ge 2$, by taking $w=v$ in \eqref{Kw}, 
  similarly to the higher regularity result in Lemma \ref{lem:high-regularity} (replacing $A$ with $A_r$), we have
  \eq{\label{vHn}\|v\|_{H^m(\OchO)} \lesssim\|K v\|_{H^{m-2}(\OchO)}+k^2\|v\|_{H^{m-2}(\OchO)}.}
  Next, we estimate $K v$ in $H^{m-2}$-norm. 
  First, for any $l\ge 0$, taking $w=K^lv$ in \eqref{Kw} again gives
  \eq{\label{Klv}\|K^l v\|_{H^n(\OchO)} \lesssim\|K^{l+1} v\|_{H^{n-2}(\OchO)}+k^2\|K^l v\|_{H^{n-2}(\OchO)} \;\; \text{for all } n \geq 2.}
  By noting $K^l v = \sum_{j=1}^N \lambda_j^l u_j \phi_j$ and $|\lambda_j| \lesssim k^2$ when $j\leq N$, it holds that 
  \eq{\label{KlvL2}\| K^l v\|_{L^2(\mcD)} = \bigg(\sum_{j=1}^N \lambda_j^{2l} |u_j|^2 (\phi_j, \phi_j) \bigg)^\frac12\lesssim k^{2l} \|u\|_{L^2(\mcD)}.}
  Since $K^lv=T_NK^lu$, from \eqref{vH1} and \eqref{KlvL2}, we also have 
  \eq{\label{KlvH1}\|K^lv\|_{H^1(\mcD)} \lesssim k \|K^lu\|_{L^2(\mcD)}\lesssim k^{2l+1} \|u\|_{L^2(\mcD)}.}
  Therefore, by using the recursive formula in \eqref{Klv} and \eqref{KlvL2}--\eqref{KlvH1}, it is easy to find that
  \eqn{\|K v\|_{H^{m-2}(\OchO)}\lesssim k^m\|u\|_{L^2(\mcD)}.}
  By plugging the above estimate into \eqref{vHn}, we get 
  \eqn{\|v\|_{H^m(\OchO)} \lesssim k^m\|u\|_{L^2(\mcD)}+k^2\|v\|_{H^{m-2}(\OchO)},} 
  which together with \eqref{TNu} with $m=0,1$, implies by induction that \eqref{TNu} holds for any $m\ge 2$.  This concludes the proof of this lemma.
\end{proof}

Next we introduce the elliptic operator and the corresponding    sesquilinear form as follows.
\eq{\label{LN}
  \mcL_N&:=\mcL+2k^2 T_N,\\
  b(u,v)&:=\Dp{\mcL_N u,v}= a(u,v) + 2k^2 (T_N u,v) \quad \forall u,v \in H_0^1(\mcD).\label{eq:buv}
}
The following lemma says that the sesquilinear form $b$ is continuous and coercive, so the corresponding elliptic PDE operator $\mcL_N$ is positive definite. 
\begin{lemma}\label{lem:b}
  For any $u, v\in H_0^1(\mcD)$, there hold
  \begin{align}\label{est:b:cont-coer}
    |b(u,v)| \lesssim \TV{u}\TV{v} \quad \mbox{and}\quad
    \Re b(u,u)  \gtrsim \TV{u}^2.
  \end{align}
\end{lemma}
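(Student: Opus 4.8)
The continuity bound is the routine half, and I would dispatch it first. Writing $b(u,v)=a(u,v)+2k^2(T_N u,v)$ from \eqref{eq:buv}, the term $a(u,v)$ is already controlled by \eqref{est:a:cont}. For the remaining term I would apply Cauchy--Schwarz together with Lemma \ref{lem:TN} at $m=0$ to get $|(T_N u,v)|\le\|T_N u\|_{L^2(\mcD)}\|v\|_{L^2(\mcD)}\lesssim\|u\|_{L^2(\mcD)}\|v\|_{L^2(\mcD)}$, so that $2k^2|(T_N u,v)|\lesssim(k\|u\|_{L^2(\mcD)})(k\|v\|_{L^2(\mcD)})\le\TV{u}\TV{v}$. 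Combined with \eqref{est:a:cont} this yields the first inequality.

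For coercivity, the plan is to diagonalize everything in the eigenbasis $\{\phi_j\}$. Expanding $u=\sum_{j\ge1}u_j\phi_j$ with $u_j=(u,\phi_j)$ and using orthonormality, I would first record the two identities $\Re a(u,u)=\Dp{Ku,u}=\sum_{j\ge1}\lambda_j|u_j|^2$ and $\Re(T_N u,u)=(T_N u,u)=\sum_{j=1}^N|u_j|^2$. The first of these is the conceptual heart: because the stretched coefficients $A$ and $B$ are complex symmetric, taking real parts selects precisely $A_r$ and $B_r$, whence $\Re a(u,u)=(A_r\nabla u,\nabla u)-k^2(B_r u,u)=\Dp{Ku,u}$, and then $K\phi_j=\lambda_j\phi_j$ with orthonormality produces the diagonal sum. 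Substituting into \eqref{eq:buv} gives the clean representation
\[
  \Re b(u,u)=\sum_{j=1}^N(\lambda_j+2k^2)|u_j|^2+\sum_{j>N}\lambda_j|u_j|^2.
\]

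With this representation in hand the eigenvalue bounds do the rest. From \eqref{eq:lower-bound} I have $\lambda_j+2k^2\ge k^2$ for $j\le N$, and from \eqref{eq:N} I have $\lambda_j>2k^2$ for $j>N$, so both summands are nonnegative and $\Re b(u,u)\ge k^2\sum_{j\le N}|u_j|^2+\sum_{j>N}\lambda_j|u_j|^2$. Using $k^2<\lambda_j/2$ on the tail, the $L^2$ part follows at once: $k^2\|u\|_{L^2(\mcD)}^2=k^2\sum_{j\le N}|u_j|^2+k^2\sum_{j>N}|u_j|^2\le k^2\sum_{j\le N}|u_j|^2+\tfrac12\sum_{j>N}\lambda_j|u_j|^2\lesssim\Re b(u,u)$. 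To recover the gradient I would pass through $(A_r\nabla u,\nabla u)\eqsim\|\nabla u\|_{L^2(\mcD)}^2$ from \eqref{eq:Aqr} and the identity $(A_r\nabla u,\nabla u)=\Dp{Ku,u}+k^2(B_r u,u)=\sum_{j\ge1}\lambda_j|u_j|^2+k^2(B_r u,u)$; bounding the head via $\lambda_j\le 2k^2$ ($j\le N$) together with $k^2\sum_{j\le N}|u_j|^2\le\Re b(u,u)$, the tail via $\sum_{j>N}\lambda_j|u_j|^2\le\Re b(u,u)$, and $k^2(B_r u,u)\le k^2\|u\|_{L^2(\mcD)}^2$ via \eqref{eq:Bvw}, each piece is controlled by $\Re b(u,u)$, so $\|\nabla u\|_{L^2(\mcD)}^2\lesssim\Re b(u,u)$. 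Adding the two estimates gives $\TV{u}^2\lesssim\Re b(u,u)$.

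I expect the only delicate point to be the identity $\Re a(u,u)=\sum_{j}\lambda_j|u_j|^2$, which is exactly what makes the shift $2k^2 T_N$ act on the finitely many ``bad'' modes with $\lambda_j\le 2k^2$ while leaving the already-coercive high modes untouched; once this is secured, the remainder is pure bookkeeping around the spectral gap fixed at the threshold $2k^2$ in \eqref{eq:N}. A minor subtlety worth double-checking is that $(T_N u,u)$ is real and nonnegative, which I would obtain directly from orthonormality of $\{\phi_j\}$ rather than from any reality of the eigenfunctions.
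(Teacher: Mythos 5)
Your proposal is correct and takes essentially the same route as the paper's proof: both halves coincide---continuity via \eqref{est:a:cont}, Cauchy--Schwarz, and Lemma \ref{lem:TN} with $m=0$; coercivity via expanding $u$ in the eigenbasis of $K$, using $\Re a(u,u)=\Dp{Ku,u}$ and the reality of $(T_Nu,u)$ to get $\Re b(u,u)=\sum_{j\le N}(\lambda_j+2k^2)|u_j|^2+\sum_{j>N}\lambda_j|u_j|^2\ge k^2\|u\|_{L^2(\mcD)}^2$ from \eqref{eq:lower-bound} and \eqref{eq:N}. The only (immaterial) difference is in recovering the gradient: the paper simply drops the nonnegative term $2k^2(T_Nu,u)$ to obtain $\Re b(u,u)\gtrsim\|\nabla u\|_{L^2(\mcD)}^2-k^2\|u\|_{L^2(\mcD)}^2$ and combines this with the $L^2$ bound, whereas you re-expand $(A_r\nabla u,\nabla u)$ spectrally and bound head and tail separately---both are valid.
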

\begin{proof}
  The continuity is obvious by noting \eqref{est:a:cont} and Lemma~\ref{lem:TN} with $m=0$. 
  For any $u \in H_0^1(\mcD)$, we write the expansion of $u$ as $u=\sum_{j=1}^\infty u_j \phi_j$. 
  By using the lower bound estimate of $\lambda_j$ in \eqref{eq:lower-bound} and noting the truncation order in \eqref{eq:N}, we have
  \begin{align*}
    \Re b(u,u) &= (A_r\nabla u,\nabla u) - k^2(B_r u,u) + 2k^2 (T_N u, u) \\
    &= \sum_{j=1}^{\infty} u_j \Dp{K \phi_j, u} + 2k^2 \sum_{j=1}^{N} u_j (\phi_j, u) \\
    &= \sum_{j=1}^{N} (\lambda_j + 2k^2) u_j (\phi_j,u) + \sum_{j=N+1}^\infty \lambda_j u_j (\phi_j,u) \\
    &\geq k^2 \sum_{j=1}^\infty |u_j|^2 (\phi_j,\phi_j) \\
    &= k^2 \|u\|^2_{L^2(\mcD)}.
  \end{align*}
  Moreover, from \eqref{eq:Aqr}--\eqref{eq:Bvw}, we get
  \begin{align*}
    \Re b(u,u)\ge (A_r \nabla u, \nabla u)- k^2(B_r u,u)\gtrsim \|\nabla u\|_{L^2(\mcD)}^2- k^2 \|u\|^2_{L^2(\mcD)}.
  \end{align*}
  Combining the above two estimates yields the second inequality of \eqref{est:b:cont-coer}. 
\end{proof}

\begin{remark} 
  In \eqref{LN}, we define an elliptic operator $\mathcal{L}_N$ by adding an appropriate multiple of the truncated operator $T_N$ (i.e., $2k^2T_N$ in this paper) to the original PDE operator $\mathcal{L}$. 
  Such a construction is inspired by a general but complex definition in \cite[eq. (2.1)]{gs23}, which introduced a regularizing operator $\mathcal{S} = \psi(\mathcal{P})$ to ensure coercivity when added to $k^{-2}\mathcal{L}$, where $\mathcal{P}$ represents the self-adjoint operator associated with the sesquilinear form $\Re a$ (refer to \cite[(1.10)]{gs23}), and $\psi \in C^\infty_{\rm comp}$ satisfying $x+\psi(x) \geq 1$ for $x \geq \lambda_1(\mathcal{P})$, where $\lambda_1(\mathcal{P})$ is the smallest eigenvalue of the operator $\mathcal{P}$. 
  It is easy to verify that our definition is actually equivalent to setting $\mathcal{P} = k^{-2} K$ and explicitly defining $\psi(x) \in C^\infty(\mathbb{R})$ as a cut-off function such that
  \begin{align*}
    \supp \psi \subset (-2, k^{-2}\lambda_{N+1}), \quad \psi|_{\mathbb{R}} \geq 0, \quad  \psi|_{[-1,2]} = 2,
  \end{align*}
  and hence, the  regularizing operator is given by $\mathcal{S} = \psi(k^{-2} K) = 2 T_N$.
  Our explicit and simple definition of the elliptic operator is crucial for a better understanding and further analysis, particularly in the context of preasymptotic error analysis for higher-order methods.
\end{remark}

The next lemma gives the stability and regularity estimates for the elliptic problem associated with the elliptic operator $\mcL_N$. 
\begin{lemma}\label{lem:LN}
  Suppose $g \in H^m(\OchO)$ with $m \geq 0$ and let $w$ be the solution to the elliptic problem 
  \begin{align}
    \mcL_N w =g \;\; \textnormal{in} \; \mcD; \quad w=0  \;\;\textnormal{on} \; \hat\Gamma. \label{eq:LNwg}
  \end{align}
  Then $w \in H^{m+2}(\OchO)\cap H_0^1(\mcD)$ uniquely exists and satisfies
  \begin{equation}\label{est:LNw}
    \TV{w} \lesssim k^{-1} \|g\|_{L^2(\mcD)} \quad\mbox{and}\quad \| w \|_{H^{m+2}(\OchO)} \lesssim k^mC_{m,g},
  \end{equation}
 where $C_{m,g}:=\sum_{j=0}^m k^{-j} \| g \|_{H^{j}(\OchO)}$.
\end{lemma}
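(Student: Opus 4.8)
The plan is to prove the two estimates in \eqref{est:LNw} separately, using the coercivity of $b$ from Lemma \ref{lem:b} for existence, uniqueness, and the energy bound, and then using elliptic regularity (Lemma \ref{lem:high-regularity}) together with the regularizing property of $T_N$ (Lemma \ref{lem:TN}) for the higher-regularity estimate. First I would establish the energy estimate $\TV{w}\lesssim k^{-1}\|g\|_{L^2(\mcD)}$: since $b$ is continuous and coercive on $H_0^1(\mcD)$, the Lax--Milgram theorem gives existence and uniqueness of $w\in H_0^1(\mcD)$ solving $b(w,v)=(g,v)$ for all $v$; testing with $v=w$ and using $\Re b(w,w)\gtrsim \TV{w}^2$ on the left and Cauchy--Schwarz $|(g,w)|\le \|g\|_{L^2(\mcD)}\|w\|_{L^2(\mcD)}\le k^{-1}\|g\|_{L^2(\mcD)}\TV{w}$ on the right yields the claim.

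Next I would derive the regularity bound. The key is to rewrite the equation \eqref{eq:LNwg} as a pure elliptic problem for $-\Div(A_r\nabla w)$. Recalling $\mcL_N=\mcL+2k^2T_N$ and the decomposition of $\mcL$ into its real-part elliptic piece plus lower-order terms, I would move everything except $-\Div(A_r\nabla w)$ to the right-hand side, so that
\begin{equation*}
  -\Div(A_r\nabla w) = g + \Div\big((A-A_r)\nabla w\big) + k^2 B w - 2k^2 T_N w =: \tilde g \quad\text{in }\mcD,\qquad w=0\text{ on }\hat\Gamma.
\end{equation*}
Then Lemma \ref{lem:high-regularity} (with $A$ replaced by $A_r$) gives $\|w\|_{H^{m+2}(\OchO)}\lesssim \|\tilde g\|_{H^m(\OchO)}$, and the task reduces to bounding each piece of $\tilde g$ in the $H^m(\OchO)$-norm. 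The term $2k^2T_Nw$ is controlled by Lemma \ref{lem:TN}: $\|T_Nw\|_{H^m(\OchO)}\lesssim k^m\|w\|_{L^2(\mcD)}\lesssim k^{m-1}\|g\|_{L^2(\mcD)}$, so $k^2$ times this contributes $k^{m+1}\|g\|_{L^2(\mcD)}$, which is absorbed into $k^m C_{m,g}$. The genuinely difficult term is $\|g\|_{H^m(\OchO)}$ together with the lower-order contributions $\Div((A-A_r)\nabla w)$ and $k^2 Bw$, which carry one or two fewer derivatives of $w$ than the left-hand side; these force a bootstrap.

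The main obstacle will be handling this loss of derivatives and the explicit $k$-powers simultaneously, which I expect to resolve by induction on $m$. The base cases $m=0,1$ follow from the energy estimate and the $m=0,1$ instances of Lemma \ref{lem:TN} applied to $\tilde g$. For the inductive step, the coefficient matrix $A-A_r$ is smooth (indeed $O(\sigma_0)$) so multiplying by it and differentiating costs only constants, and the lower-order terms $\|w\|_{H^{m+1}(\OchO)}$ and $k^2\|w\|_{H^m(\OchO)}$ are bounded by the inductive hypothesis at levels $m-1$ and $m-2$, introducing factors $k^{m-1}C_{m-1,g}$ and $k^{m}C_{m-2,g}$; since $C_{m-1,g}\le C_{m,g}$ and the definition $C_{m,g}=\sum_{j=0}^m k^{-j}\|g\|_{H^j(\OchO)}$ exactly accommodates the term $k^m\cdot k^{-m}\|g\|_{H^m(\OchO)}=\|g\|_{H^m(\OchO)}$ coming directly from $g$, all contributions assemble into $k^m C_{m,g}$. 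I would track the $k$-weights carefully at each step to confirm no power of $k$ is lost, which is the delicate bookkeeping at the heart of the wave-number-explicit estimate.
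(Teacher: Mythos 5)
Your energy estimate (Lax--Milgram plus coercivity from Lemma \ref{lem:b}, tested with $v=w$) is exactly the paper's first step. The regularity argument, however, has a genuine gap in how you set up the bootstrap. You rewrite the equation as $-\Div(A_r\nabla w)=g+\Div\big((A-A_r)\nabla w\big)+k^2Bw-2k^2T_Nw$ and classify $\Div\big((A-A_r)\nabla w\big)$ as a lower-order term ``carrying one fewer derivative''. It is not lower order: $A-A_r=\mbi\,\Im A$ is a (piecewise smooth) matrix field, so $\Div\big((A-A_r)\nabla w\big)$ has principal part $\mbi\,\Im A:\nabla^2 w$ and is a full second-order operator in $w$, whence $\|\Div((A-A_r)\nabla w)\|_{H^m(\OchO)}\eqsim\|w\|_{H^{m+2}(\OchO)}$ --- the same order as the left-hand side. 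Nor is its coefficient small: in the PML region $\Im A$ has size $\sigma_0$, and the convergence condition \eqref{PML:cond} forces $\sigma_0\gtrsim 1$, so this term can be neither absorbed into the left-hand side nor bounded by the inductive hypothesis at level $m-1$. The failure already appears at your base case $m=0$: $\tilde g$ contains second derivatives of $w$, so $\|\tilde g\|_{L^2(\mcD)}$ is not controlled by the energy estimate, contrary to what you assert.

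The fix is not to split $A$ at all. The paper's Lemma \ref{lem:high-regularity} is stated for the complex PML coefficient $A$ itself (the ellipticity it needs is $\Re(A\bsr,\bsr)\eqsim\|\bsr\|_{L^2(\mcD)}^2$ from \eqref{eq:Aqr}); the variant with $A_r$ is invoked only in the proof of Lemma \ref{lem:TN}, where the equation genuinely has coefficient $A_r$. So keep $-\Div(A\nabla w)$ intact and move only the zeroth-order terms to the right: $(A\nabla w,\nabla v)=(g-2k^2T_Nw+k^2Bw,v)$ for all $v\in H_0^1(\mcD)$. Then Lemmas \ref{lem:high-regularity} and \ref{lem:TN} give $\|w\|_{H^{m+2}(\OchO)}\lesssim\|g\|_{H^m(\OchO)}+k^{m+2}\|w\|_{L^2(\mcD)}+k^2\|w\|_{H^m(\OchO)}\lesssim k^mC_{m,g}+k^2\|w\|_{H^m(\OchO)}$, and induction in steps of two (with base cases $m=0,1$ supplied by the energy bound, as you intended) closes the argument. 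This is the paper's proof; your $k$-weight bookkeeping for the $2k^2T_Nw$ and $k^2Bw$ terms is correct and carries over verbatim once the spurious second-order term is removed from the right-hand side.
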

\begin{proof}
  By \eqref{eq:LNwg}, $w \in H_0^1(\mcD)$ is the solution to the variational formulation
  \[
    b(w, v) = (g, v) \quad \forall v \in H_0^1(\mcD).
  \]
  Taking $v = w$ and using Lemma \ref{lem:b} yield
  \begin{align*}
    \TV{w}^2 \lesssim \Re b(w,w) = \Re (g,w) \lesssim k^{-1}\|g\|_{L^2(\mcD)} \TV{w},
  \end{align*}
  which implies the first inequality of \eqref{est:LNw}. Noting 
  \[
    (A \nabla w,\nabla v) = (g - 2k^2 T_N w + k^2 B w,v) \quad \forall v\in H_0^1(\mcD),
  \]
  and using Lemma \ref{lem:high-regularity} and Lemma \ref{lem:TN}, we find that
  \begin{align*}
    \|w\|_{H^{m+2}(\OchO)} &\lesssim \|g\|_{H^m({\OchO})} + k^{m+2} \|w\|_{L^2({\mcD})} + k^2 \|w\|_{H^m(\OchO)} \\
    &\lesssim k^mC_{m,g} + k^2 \|w\|_{H^m(\OchO)}, 
  \end{align*}
  which gives the second inequality of \eqref{est:LNw} by induction. 
\end{proof}

\subsection{Decomposition lemma}\label{sec:decomp}
In this subsection, we prove the following decomposition lemma for the PML solution to \eqref{eq:PML3}, which is a key ingredient in the error estimation for higher-order (CIP-)FEM (see also \cite{ms10,zw13,dw15}).
\begin{lemma}\label{lem:decomp} 
  Under the conditions of Lemma \ref{lem:conv}, suppose $g\in H^m(\OchO)$ with $m \geq 0$ and let $w$ be the solution to the PML problem \eqref{eq:PML3}. 
  Then there exists a splitting $w = w_\mcE + w_\mcA$ such that $w_\mcE \in H^{m+2}(\OchO)\cap H_0^1(\mcD)$ and $w_\mcA \in H^{j}(\OchO) \cap H_0^1(\mcD)~(\forall j \geq 0)$ satisfying
  \begin{align}
    &\| w_\mcE \|_{H^j(\OchO)} \lesssim k^{j-2} \|g\|_{L^2(\mcD)}, \;\; j=0,1,2, \;\; \text{and}\;\; \| w_\mcE \|_{H^{m+2}(\OchO)} \lesssim k^m C_{m,g}; \label{est:wE-wA}\\
    &\| w_\mcA \|_{H^j(\OchO)} \lesssim k^{j-1} \| g \|_{L^2(\mcD)}; \label{est:wE-wA1}\\
    &\| w \|_{H^{m+2}(\OchO)}\lesssim k^{m+1} \tilde{C}_{m,g}, \label{est:wE-wA2}
  \end{align}
  where $\tilde{C}_{m,g}:=\| g \|_{L^2(\mcD)}+k^{-1}C_{m,g}$ with $ C_{m,g}=\sum_{j=0}^m k^{-j} \| g \|_{H^{j}(\OchO)}$.
\end{lemma}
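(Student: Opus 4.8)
The plan is to define the smooth elliptic part $w_\mcE$ as the solution of the auxiliary \emph{elliptic} problem governed by $\mcL_N$, and then to read off the oscillatory part $w_\mcA := w - w_\mcE$ as the PML solution of a residual equation whose right-hand side is the smooth, spectrally-truncated quantity $2k^2 T_N w_\mcE$. Concretely, I would first let $w_\mcE \in H^{m+2}(\OchO)\cap H_0^1(\mcD)$ be the unique solution of $\mcL_N w_\mcE = g$ in $\mcD$ with $w_\mcE = 0$ on $\hat\Gamma$. All the bounds on $w_\mcE$ in \eqref{est:wE-wA} then follow directly from Lemma \ref{lem:LN}: the energy bound $\TV{w_\mcE} \lesssim k^{-1}\|g\|_{L^2(\mcD)}$ gives the cases $j=0$ and $j=1$ (recalling $k\gtrsim 1$), the regularity bound with $m=0$ gives the $j=2$ case, and the general regularity bound with index $m$ gives $\|w_\mcE\|_{H^{m+2}(\OchO)} \lesssim k^m C_{m,g}$.

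The key algebraic observation is that, since $\mcL_N = \mcL + 2k^2 T_N$ by \eqref{LN}, subtracting the weak identity $\mcL w_\mcE = g - 2k^2 T_N w_\mcE$ from $\mcL w = g$ yields
\[
  \mcL w_\mcA = 2k^2 T_N w_\mcE \;\;\text{in}\; \mcD,\qquad w_\mcA = 0 \;\;\text{on}\; \hat\Gamma.
\]
Thus $w_\mcA$ solves a PML problem of the form \eqref{eq:PML3} whose source lives in the finite-dimensional eigenspace $\mathrm{span}\{\phi_1,\dots,\phi_N\}$ and is therefore arbitrarily smooth. To begin the estimate for $w_\mcA$, I would bound the source by Lemma \ref{lem:TN} (with $m=0$) together with the already-established $j=0$ bound on $w_\mcE$, namely $\|2k^2 T_N w_\mcE\|_{L^2(\mcD)} \lesssim k^2 \|w_\mcE\|_{L^2(\mcD)} \lesssim \|g\|_{L^2(\mcD)}$. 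Feeding this into the PML stability estimate of Lemma \ref{lem:stab} immediately produces the cases $j=0,1,2$ of \eqref{est:wE-wA1}.

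For the higher-order bounds $j\ge 3$ in \eqref{est:wE-wA1}, I would bootstrap via elliptic regularity. Rewriting the residual equation as $-\Div(A\nabla w_\mcA) = 2k^2 T_N w_\mcE + k^2 B w_\mcA$ and invoking Lemma \ref{lem:high-regularity} gives $\|w_\mcA\|_{H^j(\OchO)} \lesssim \|2k^2 T_N w_\mcE\|_{H^{j-2}(\OchO)} + k^2\|w_\mcA\|_{H^{j-2}(\OchO)}$, where the multiplier $B$ is piecewise smooth on $\OchO$ with $k$-independent bounds. Since Lemma \ref{lem:TN} gives $\|T_N w_\mcE\|_{H^{j-2}(\OchO)} \lesssim k^{j-2}\|w_\mcE\|_{L^2(\mcD)} \lesssim k^{j-4}\|g\|_{L^2(\mcD)}$, the source term is $\lesssim k^{j-2}\|g\|_{L^2(\mcD)}$; combining this with the inductive hypothesis $\|w_\mcA\|_{H^{j-2}(\OchO)} \lesssim k^{j-3}\|g\|_{L^2(\mcD)}$ closes the induction at $\lesssim k^{j-1}\|g\|_{L^2(\mcD)}$. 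Finally, \eqref{est:wE-wA2} follows at once from $w = w_\mcE + w_\mcA$, the triangle inequality, and the $H^{m+2}$-bounds on $w_\mcE$ (of order $k^m C_{m,g}$) and on $w_\mcA$ (of order $k^{m+1}\|g\|_{L^2(\mcD)}$), which sum to $k^{m+1}\tilde C_{m,g}$.

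I expect no single deep obstacle, since the construction is engineered so that every estimate reduces to a previously proved lemma; the main thing to get right is the careful tracking of powers of $k$ through the bootstrap so that each level gains exactly one power, together with the verification that the piecewise-smooth, $k$-independent nature of the multipliers $A$ and $B$ on $\OchO$ lets the elliptic regularity of Lemma \ref{lem:high-regularity} be applied repeatedly without destroying the $\OchO$-localization. The genuinely novel ingredient, relative to the Fourier or semiclassical decompositions of earlier works, is precisely the spectral-truncation definition $w_\mcE = \mcL_N^{-1}g$, which forces the oscillatory residual $\mcL w_\mcA$ to be explicitly smooth and finite-rank.
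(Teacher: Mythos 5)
Your proposal is correct and takes essentially the same route as the paper: the paper likewise sets $w_\mcE = \mcL_N^{-1}g$ and lets $w_\mcA$ solve the residual PML problem $\mcL w_\mcA = 2k^2 T_N w_\mcE$, derives \eqref{est:wE-wA} from Lemma \ref{lem:LN}, gets the low-order cases of \eqref{est:wE-wA1} from Lemmas \ref{lem:stab} and \ref{lem:TN}, and obtains \eqref{est:wE-wA2} by the triangle inequality. The only immaterial difference is the higher-order bootstrap for $w_\mcA$: the paper notes $\mcL_N w_\mcA = 2k^2 T_N(w_\mcE + w_\mcA)$ and reuses Lemma \ref{lem:LN}, whereas you iterate the shift estimate of Lemma \ref{lem:high-regularity} directly on $-\Div(A\nabla w_\mcA) = 2k^2 T_N w_\mcE + k^2 B w_\mcA$, which amounts to the same induction with the same powers of $k$.
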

\begin{proof}
  Let $w_{\mcE} \in H_0^1(\mcD)$ and $w_{\mcA} \in H_0^1(\mcD)$ be the solutions to
  \begin{equation}\label{eq:wE-wA} 
    b(w_{\mcE},v) = (g,v) \quad \mbox{and}\quad 
    a(w_{\mcA},v) = 2k^2  (T_N w_{\mcE},v) \quad \forall v\in H_0^1(\mcD),
  \end{equation}
  respectively. It's easy to see that $a(w_{\mcE}+w_{\mcA}, v)=(g,v)$ for all $v\in H_0^1(\mcD)$, which implies that $w=w_{\mcE}+w_{\mcA}$ is the solution to the problem \eqref{eq:PML3}. 

  It is clear that \eqref{est:wE-wA} holds by using Lemma \ref{lem:LN} and \eqref{est:wE-wA2} is a direct consequence of \eqref{est:wE-wA}--\eqref{est:wE-wA1}. It remains to prove \eqref{est:wE-wA1}. From Lemmas~\ref{lem:stab} and \ref{lem:TN}, we have
  \eq{\label{wA}
    \TV{w_\mcA}\lesssim k^2 \|w_{\mcE}\|_{L^2(\mcD)} \lesssim \|g\|_{L^2(\mcD)},
  }
  that is, \eqref{est:wE-wA1} holds for $j=0,1$.  Noting from \eqref{LN} and \eqref{eq:wE-wA} that
  \eqn{\mcL_Nw_{\mcA}=2k^2 T_N(w_{\mcE}+w_{\mcA}).}
  Therefore, by Lemma \ref{lem:LN}, Lemma \ref{lem:TN}, and \eqref{wA}, we obtain for $j\ge 2$,
  \eqn{
    \|w_{\mcA}\|_{H^j(\OchO)} &\lesssim k^{j-2} \sum_{i=0}^{j-2} k^{-i} \|2 k^2  T_N(w_{\mcE}+w_{\mcA})\|_{H^{i}(\OchO)}\\
    &\lesssim k^j\big(\|w_{\mcE}\|_{L^2(\mcD)}+\|w_{\mcA}\|_{L^2(\mcD)}\big) \lesssim k^{j-1}\|g\|_{L^2(\mcD)}.
  }
  This completes the proof of this lemma.
\end{proof}

\begin{remark} \label{rmk:decomp} 
  \begin{enumerate}[\rm(i)]
    \item In next section, the decomposition lemma will be used for proving the preasymptotic error estimate of the higher-order (CIP-)FEM. In the splitting $w=w_\mcE+w_\mcA$ with regularity estimates \eqref{est:wE-wA}--\eqref{est:wE-wA1}, the $H^2$-norm of the elliptic part $w_\mcE$ is independent of the wave number $k$, which will not cause any problems in the error estimate. The second part $w_\mcA$ is more oscillatory than $w_\mcE$ but analytic, so it can take advantage of the strengths of higher-order elements. 
    \item The splitting of the solution was first obtained by an Fourier transform technique in \cite[Lemma 3.5]{ms10}, which considered the Helmholtz equation with DtN boundary condition. 
    By defining a sequence of solutions to the elliptic problems recursively, a new approach to build the splitting was presented in \cite[Theorem 1]{cn20a}. 
    In this paper, we give a new splitting for the PML solution by using the regularizing operator $T_N$. 
    One of the main improvements is that the new splitting is simpler to construct and easier to estimate compared to the previous approaches in \cite{ms10,cn20a,gs23}, since it is defined by only one elliptic problem and one PML problem, i.e., the equations in \eqref{eq:wE-wA}. 
    In addition, the splitting in Lemma \ref{lem:decomp} is obtained for the PML problem with parameters that are piecewise constants, which is not included in the previous works \cite{cn20a,gs23}.
    \item Another obvious approach for deriving the higher regularity estimate of the PDE solution $w$ is to apply the shift estimate in Lemma~\ref{lem:high-regularity} and induction (see also \cite{dw15,gs23}), which would give the estimate  $\| w \|_{H^{m+2}(\OchO)} \lesssim k^{m+1} C_{m,g}$. Clearly, our result \eqref{est:wE-wA2} based on the splitting improves the previous estimates. 
  \end{enumerate}
\end{remark}
  

\section{CIP-FEM and preasymptotic error estimates}\label{sec:CIP-FEM}
In this section, we first introduce the CIP-FEM (including the FEM) for the PML problem \eqref{eq:PML1}--\eqref{eq:PML2}, and then present the preasymptotic error estimates for them of arbitrary but fixed order $p \geq 1$.

\subsection{The CIP-FEM}
Let $\{\mcT_h\}$ be a family of subdivisions of $\mcD$ whose members are curvilinear triangulations in two dimensions and curved tetrahedrons in three dimensions. 
For any $K\in \mcT_h$, we denote the diameter of $K$ by $h_K:=\diam (K)$, and for any $e \subset \partial K$, we denote $h_e:=\diam(e)$. The mesh size of $\mcT_h$ is defined by $h:=\max_{K\in\mcT_h} h_K$. We assume that $\mcT_h$ is quasi-uniform and regular, and satisfies $h \eqsim h_K \eqsim h_e$ for any $K \in \mcT_h$ and $h_e \subset \partial K$. 
For simplicity, we also assume that $\mcT_h$ fits the interface $\Gamma$, i.e., $\Gamma \cap {\rm int}\, K = \emptyset$ for any $K\in \mcT_h$.
Let $\mcE_h^I$ denote the set of all the interior edges/faces of $\mcT_h$ in $\Omega$.
Additionally, we denote by $\widehat K$ the reference element of $K \in \mcT_h$ and by $F_K$ the element map from $\widehat K$ to $K$; see, e.g., \cite[\S 5]{ms10}. 

The $p$-th order ($p\ge 1$) finite element space is defined by
\[
  V_h:=\big\{ v_h \in H_0^1(\mcD): v_h|_K \circ F_K \in \mcP_p(\wh K)\;\;\mbox{for all }K\in\mcT_h \big\},
\]
where $\mcP_p(\wh K)$ denotes the set of all polynomials with degrees do not exceed $p$ on $\wh K$. 
Next, we introduce the CIP-FEM for the PML problem \eqref{eq:PML1}--\eqref{eq:PML2}. First, we define the penalty term by
\begin{equation}
  J(v,w) := \sum_{j=1}^p \sum_{e \in \mcE_h^I} \gamma_{j,e} h_e^{2j-1} \Dp{\Jp{\frac{\partial^j v}{\partial n_e^j}},\Jp{\frac{\partial^j w}{\partial n_e^j}}}_e,
\end{equation}
where $\gamma_{j,e}$ are the penalty parameters. Denote by $V:=H_0^1(\mcD) \cap H^{p+1}(\mcT_h)$ and
\[
  a_\gamma(v, w) := a(v,w) + J(v,w) \quad \forall  v, w \in V,
\]
where $a(\cdot, \cdot)$ is defined in \eqref{eq:auv}. 
Then the $p$-th order CIP-FEM for \eqref{eq:PML1}--\eqref{eq:PML2} reads as: find $u_h \in V_h$ such that
\begin{equation}
  a_\gamma(u_h, v_h) = (f, v_h) \quad \forall  v_h \in V_h. \label{eq:FEM}
\end{equation}
\begin{remark}
{\rm (i)} Clearly, if $\gamma_{j,e} \equiv 0$, then the CIP-FEM becomes the standard FEM. 

{\rm (ii)} The CIP-FEM was first introduced by Douglas and Dupont \cite{dd76} for second-order elliptic and parabolic PDEs, and has been applied to the the Helmholtz problem \eqref{eq:Helm} with the impedance, DtN, and PML boundary conditions; see, e.g., \cite{wu14,zw13,dw15,lzz20,lw19,zw23}. The CIP-FEM has been proved to be highly effective in reducing pollution errors, while the work \cite{lw19} on PML boundary condition focused only on the linear CIP-FEM. 
%
\end{remark}

\subsection{Approximation properties}
Define the following discrete energy norm by
\begin{equation}
  \TV{v}_\gamma := \bggS{\TV{v}^2 + \sum_{j=1}^p \sum_{e\in \mcE_h^I} |\gamma_{j,e}| h_e^{2j-1}\norm{\Jp{\frac{\partial^j v}{\partial n_e^j}}}_{L^2(e)}^{2}}^{1/2}. \label{eq:norm-g}
\end{equation}
Given that the above norm may not be well defined for functions not smooth enough, we introduce the following quantity to measure the error between functions $v \in H^1(\mcD)$ and $v_h \in V_h$.
\begin{equation}
  E_\gamma (v,v_h) := \bggS{\TV{v-v_h}^2 + \sum_{j=1}^p \sum_{e\in \mcE_h^I} |\gamma_{j,e}| h_e^{2j-1}\norm{\Jp{\frac{\partial^j v_h}{\partial n_e^j}}}_{L^2(e)}^{2}}^{1/2}. \label{eq:E-g}
\end{equation}
Clearly, $E_\gamma (v,v_h)=\TV{v-v_h}_\gamma$ if $v$ is sufficiently smooth.
We have the following approximation property for $V_h$, which follows from \cite[Lemma 6.1]{dw15}.
\begin{lemma}\label{lem:approx}
  Let $1 \leq s \leq p+1$ and  $v \in H^s(\OchO) \cap H_0^1(\mcD)$.  Suppose $kh \lesssim 1$ and $|\gamma_{j,e}|\lesssim 1$ for $1\le j\le p, e\in\mcE_h^I$. Then there exists $v_h \in V_h$ such that
  \begin{equation*}
    \| v - v_h \|_{L^2(\mcD)} + h E_\gamma(v,v_h) \lesssim h^s\|v\|_{H^s(\OchO)}.
  \end{equation*}
\end{lemma}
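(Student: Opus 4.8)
The final statement is Lemma~\ref{lem:approx}, a standard finite-element approximation result stated for the split norm $E_\gamma$. I want to sketch how I would prove it.

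The plan is to construct the approximant $v_h$ from a suitable interpolation/quasi-interpolation operator and then estimate the two contributions in $E_\gamma(v,v_h)$ separately, namely the energy-norm term $\TV{v-v_h}$ and the penalty (jump) term. First I would recall that $v \in H^s(\OchO) \cap H_0^1(\mcD)$ with $1 \le s \le p+1$, and that the mesh fits the interface $\Gamma$, so that on each piece $\Omega$ and $\hat\Omega$ the function is genuinely $H^s$. Because $s$ may be as small as $1$, the nodal interpolant need not be well-defined, so I would use a Scott--Zhang type quasi-interpolation operator $\Pi_h$ mapping into $V_h \subset H_0^1(\mcD)$ and preserving the homogeneous boundary condition on $\hat\Gamma$. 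The reference-element/element-map framework (the maps $F_K$) reduces everything to the reference element $\wh K$, where the Bramble--Hilbert lemma gives the local estimates. Since the paper cites \cite[Lemma 6.1]{dw15}, I would lean on exactly this construction and transplant its estimates to the present PML setting; the only genuinely new ingredient is that here the domain splits across $\Gamma$, so I would apply the interpolation piecewise and sum.

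The key steps, in order, are as follows. First, invoke the quasi-interpolation operator and the standard local approximation estimates on each element $K$: $\|v - v_h\|_{L^2(K)} \lesssim h_K^s |v|_{H^s(\wt K)}$ and $|v - v_h|_{H^1(K)} \lesssim h_K^{s-1} |v|_{H^s(\wt K)}$, where $\wt K$ is the patch of elements around $K$; these follow from Bramble--Hilbert on the reference element together with shape-regularity and quasi-uniformity ($h \eqsim h_K$). Second, square and sum over all $K \in \mcT_h$ to obtain the global bounds $\|v - v_h\|_{L^2(\mcD)} \lesssim h^s \|v\|_{H^s(\OchO)}$ and $\|\nabla(v-v_h)\|_{L^2(\mcD)} \lesssim h^{s-1}\|v\|_{H^s(\OchO)}$. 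Third, combine these with the definition of the energy norm $\TV{\cdot}$ in \eqref{eq:e-norm}: since $\TV{v-v_h}^2 = \|\nabla(v-v_h)\|_{L^2(\mcD)}^2 + k^2 \|v-v_h\|_{L^2(\mcD)}^2$, and the mesh hypothesis $kh \lesssim 1$ lets me absorb the factor $k$ in the $L^2$ term via $k \|v-v_h\|_{L^2} \lesssim (kh) h^{s-1}\|v\|_{H^s} \lesssim h^{s-1}\|v\|_{H^s}$, I get $\TV{v-v_h} \lesssim h^{s-1}\|v\|_{H^s(\OchO)}$. Fourth, estimate the jump term: on each interior edge $e \in \mcE_h^I$, a scaled trace inequality bounds $\|[\partial^j v_h/\partial n_e^j]\|_{L^2(e)}$ by the elementwise seminorms of $v_h$ (equivalently of $v-v_h$, since the true solution $v \in H^s$ with $s \le p+1$ has vanishing jumps up to the relevant order), yielding $h_e^{2j-1} \|[\partial^j v_h/\partial n_e^j]\|_{L^2(e)}^2 \lesssim h^{2(s-1)} |v|_{H^s(\wt e)}^2$; summing over $e$ and using $|\gamma_{j,e}| \lesssim 1$ controls the penalty contribution by $h^{2(s-1)}\|v\|_{H^s(\OchO)}^2$. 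Fifth, combine the third and fourth steps to conclude $E_\gamma(v,v_h) \lesssim h^{s-1}\|v\|_{H^s(\OchO)}$, and multiply by $h$ to match the $h E_\gamma$ term; adding the already-established $L^2$ estimate gives the claimed bound.

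The main obstacle, and the step that needs the most care, is the jump/penalty term, step four. Controlling $\|[\partial^j v_h/\partial n_e^j]\|_{L^2(e)}$ for all $1 \le j \le p$ requires bounding high-order normal derivatives of the discrete function across interior edges. The trick is that these jumps vanish for the exact $v$ (which lies in $H^s$ with $s$ large enough to have continuous derivatives up to order $\min(s,p)-1$ in the appropriate trace sense, so the jumps $[\partial^j v]=0$), so one rewrites the jump of $v_h$ as the jump of $v_h - v$ and applies an inverse/trace inequality on each element. The scaling of these inequalities must be tracked carefully: the factor $h_e^{2j-1}$ in the penalty is precisely calibrated so that after the trace inequality (which costs $h_e^{-1}$) and the inverse estimate relating edge derivatives to element seminorms, everything collapses to the single power $h^{2(s-1)}$ independent of $j$. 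A secondary, minor subtlety is ensuring the interpolation estimates hold across the interface $\Gamma$: because $\mcT_h$ fits $\Gamma$ and $v \in H^s(\OchO)$ is piecewise smooth, I would apply the interpolation separately on the elements in $\Omega$ and in $\hat\Omega$, so no element patch $\wt K$ straddles $\Gamma$ and the piecewise-$H^s$ regularity suffices. Since all of this is essentially the content of \cite[Lemma 6.1]{dw15} adapted to the split domain, I would present it compactly and defer the routine reference-element computations to that citation.
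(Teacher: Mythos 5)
Your overall route coincides with the paper's: the paper gives no self-contained proof and simply invokes \cite[Lemma 6.1]{dw15}, which is precisely the construction you sketch (quasi-interpolation into $V_h$, Bramble--Hilbert on the reference element via the maps $F_K$, absorption of the $k^2$-term using $kh\lesssim 1$, and trace/inverse estimates for the penalty term, applied piecewise so that no patch straddles $\Gamma$; note, incidentally, that the penalized edges $\mcE_h^I$ lie in the interior of $\Omega$ only, so your worry about jumps across the interface never arises for the penalty sum). However, your step four — which you yourself single out as the delicate one — has a genuine gap in the stated generality $1\le s\le p+1$. You propose to rewrite $\Jp{\partial^j v_h/\partial n_e^j}$ as $\Jp{\partial^j(v_h-v)/\partial n_e^j}$ on the grounds that the corresponding jumps of the exact $v$ vanish. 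But the trace of $\partial^j v$ on an edge is only defined (let alone zero) when $v$ has local regularity $H^{s}$ with $s>j+1/2$, whereas the lemma allows $s$ as small as $1$ while $j$ runs up to $p$; moreover the trace inequality you would apply to $\partial^j(v-v_h)$, namely $\norm{w}_{L^2(e)}^2\lesssim h^{-1}\norm{w}_{L^2(K)}^2+h|w|_{H^1(K)}^2$, requires $v\in H^{j+1}$ locally, which you do not have for $j\ge s$. Concretely, already for $s=1$, $p=2$ your argument cannot control the $j=1,2$ penalty contributions, yet the claimed bound $E_\gamma(v,v_h)\lesssim \|v\|_{H^1(\OchO)}$ must still hold.

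The standard repair — and what underlies the cited \cite[Lemma 6.1]{dw15} — is to subtract not $v$ but a single polynomial. On the patch $K_1\cup K_2$ sharing $e$, take an averaged Taylor polynomial $q$ of degree $\min(s,p+1)-1\le p$ satisfying $|v-q|_{H^1(K_1\cup K_2)}\lesssim h^{s-1}\|v\|_{H^s(\wt e)}$, where $\wt e$ denotes the patch around $e$. Since $q$ is one polynomial across $e$, all of its jumps vanish identically, so $\Jp{\partial^j v_h/\partial n_e^j}=\Jp{\partial^j(v_h-q)/\partial n_e^j}$ for \emph{every} $1\le j\le p$ with no regularity demanded of $v$; and since $v_h-q$ is a piecewise polynomial, the discrete trace inequality ($\norm{w}_{L^2(e)}\lesssim h^{-1/2}\norm{w}_{L^2(K)}$) and inverse estimates give, uniformly in $j$,
\begin{align*}
h_e^{j-\frac12}\norm{\Jp{\frac{\partial^j (v_h-q)}{\partial n_e^j}}}_{L^2(e)}
\lesssim |v_h-q|_{H^1(K_1\cup K_2)}
\le |v_h-v|_{H^1(K_1\cup K_2)}+|v-q|_{H^1(K_1\cup K_2)}
\lesssim h^{s-1}\|v\|_{H^s(\wt e)},
\end{align*}
which shows why the calibration $h_e^{2j-1}$ collapses everything to the single power $h^{2(s-1)}$ independently of $j$. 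Summing over $e\in\mcE_h^I$ and the finitely many $j$, using $|\gamma_{j,e}|\lesssim 1$ and the bounded overlap of patches, your step five then goes through unchanged. With this substitution your proof is correct and is essentially the argument behind the paper's citation.
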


The following lemma gives approximation estimates for the solution to the truncated PML problem \eqref{eq:PML3} in the discrete space $V_h$.
\begin{lemma}\label{lem:approx2}
  Suppose $g \in H^m(\OchO)$ with $m\geq 0$ and let $w\in H^{m+2}(\OchO) \cap H_0^1(\mcD)$ be the solution to PML problem \eqref{eq:PML3}. Suppose $kh \lesssim 1$ and $|\gamma_{j,e}|\lesssim 1$ for $1\le j\le p, e\in\mcE_h^I$. Then
  \begin{equation*}
    \inf_{v_h \in V_h} E_\gamma(w,v_h)\lesssim
    k^{s-1} h^s C_{s-1,g} + (kh)^p \|g\|_{L^2(\mcD)},
  \end{equation*}
  where $s = \min \{ m+1, p\}$ and $C_{s-1,g}=\sum_{j=0}^{s-1}k^{-j}\|g\|_{H^j(\OchO)}$.  
\end{lemma}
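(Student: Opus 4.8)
The plan is to feed the decomposition of Lemma~\ref{lem:decomp} into the single-function approximation estimate of Lemma~\ref{lem:approx}, approximating the non-oscillatory elliptic part and the analytic oscillatory part at \emph{different} smoothness levels. First I would split $w=w_\mcE+w_\mcA$ as in Lemma~\ref{lem:decomp} and note that the error functional $E_\gamma(\cdot,\cdot)$ is subadditive in its two arguments jointly: its energy-norm part is controlled by Minkowski's inequality, and its penalty part splits linearly because $\Jp{\partial_{n_e}^j(v_h^\mcE+v_h^\mcA)}=\Jp{\partial_{n_e}^j v_h^\mcE}+\Jp{\partial_{n_e}^j v_h^\mcA}$. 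Consequently, once approximants $v_h^\mcE,v_h^\mcA\in V_h$ are chosen for the two pieces, $\inf_{v_h\in V_h}E_\gamma(w,v_h)\le E_\gamma(w_\mcE,v_h^\mcE)+E_\gamma(w_\mcA,v_h^\mcA)$, so it suffices to estimate the two summands separately.

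Put $s=\min\{m+1,p\}$, so that $2\le s+1\le p+1$. For the elliptic part I would apply Lemma~\ref{lem:approx} with Sobolev index $s+1$, which gives some $v_h^\mcE$ with $E_\gamma(w_\mcE,v_h^\mcE)\lesssim h^{s}\|w_\mcE\|_{H^{s+1}(\OchO)}$. The crucial step is to bound $\|w_\mcE\|_{H^{s+1}(\OchO)}$ sharply. Since $w_\mcE$ is defined by $b(w_\mcE,v)=(g,v)$ in \eqref{eq:wE-wA}, i.e. it solves $\mcL_N w_\mcE=g$, and since $g\in H^{s-1}(\OchO)$ (because $s-1\le m$), I would re-invoke the elliptic regularity estimate of Lemma~\ref{lem:LN} \emph{with index $s-1$ rather than $m$}, obtaining $\|w_\mcE\|_{H^{s+1}(\OchO)}\lesssim k^{s-1}C_{s-1,g}$. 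This produces the first term $k^{s-1}h^s C_{s-1,g}$.

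For the analytic part I would instead exploit the full regularity of $w_\mcA$ and approximate at the maximal index $p+1$: Lemma~\ref{lem:approx} yields $v_h^\mcA$ with $E_\gamma(w_\mcA,v_h^\mcA)\lesssim h^{p}\|w_\mcA\|_{H^{p+1}(\OchO)}$, and the bound \eqref{est:wE-wA1} with $j=p+1$ gives $\|w_\mcA\|_{H^{p+1}(\OchO)}\lesssim k^{p}\|g\|_{L^2(\mcD)}$, whence $E_\gamma(w_\mcA,v_h^\mcA)\lesssim (kh)^p\|g\|_{L^2(\mcD)}$, the second term. Adding the two estimates finishes the proof.

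I expect the main obstacle to be bookkeeping rather than hard analysis. One must check that the smoothness budget is consistent in both regimes $m+1\le p$ and $m+1>p$, namely that $s+1\le p+1$ and $w_\mcE\in H^{s+1}(\OchO)$, and --- the sharpest point --- apply Lemma~\ref{lem:LN} with the reduced index $s-1$ rather than $m$ to obtain the tighter constant $C_{s-1,g}$ in the first term (using $m$ directly would be too crude). Meanwhile, the fact that the pollution term $(kh)^p$ carries only $\|g\|_{L^2(\mcD)}$ is inherited directly from the $k^{j-1}\|g\|_{L^2(\mcD)}$ bound \eqref{est:wE-wA1} on the analytic part $w_\mcA$.
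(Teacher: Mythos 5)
Your proposal is correct and coincides with the paper's own proof: the paper likewise applies Lemma~\ref{lem:decomp} with the reduced index $m=s-1$ (equivalently, your re-invocation of Lemma~\ref{lem:LN} at index $s-1$, since the $w_\mcE$ bound in Lemma~\ref{lem:decomp} is exactly Lemma~\ref{lem:LN} applied to $b(w_\mcE,v)=(g,v)$), approximates $w_\mcE$ at index $s+1$ and $w_\mcA$ at index $p+1$ via Lemma~\ref{lem:approx}, and concludes by the triangle inequality for $E_\gamma$ with $w_h=w_{\mcE,h}+w_{\mcA,h}$. Your explicit verification of the subadditivity of $E_\gamma$ and of the smoothness bookkeeping ($s-1\le m$, $s+1\le p+1$) only spells out what the paper leaves implicit.
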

\begin{proof}
  By using Lemma \ref{lem:decomp} with $m=s-1$ and $n=p+1$, there exists a splitting $w = w_\mcE + w_\mcA$ satisfying regularities \eqref{est:wE-wA}.
  According to Lemma \ref{lem:approx}, there exist two functions $w_{\mcE,h}$ and $w_{\mcA,h}$ in $V_h$ such that
  \begin{align*}
    E_\gamma(w_\mcE, w_{\mcE,h}) &\lesssim h^s \|w_\mcE\|_{H^{s+1}(\OchO)} \lesssim k^{s-1}h^s C_{s-1,g}, \quad\mbox{and}\\
    E_\gamma(w_\mcA, w_{\mcA,h}) &\lesssim h^p \|w_\mcA\|_{H^{p+1}(\OchO)} \lesssim (kh)^p \|g\|_{L^2(\mcD)}.
  \end{align*}
  Denote by $w_h := w_{\mcE,h} + w_{\mcA,h}$ the approximation of $w$  in $V_h$. Then from the triangle inequality we arrive at the first estimate and conclude the proof of this lemma.
  \end{proof}

\subsection{The elliptic projections}
To analyze the error of the CIP-FEM, we need to modify the ``elliptic'' sesquilinear form $b$ in \eqref{eq:buv} by adding the penalty term $J$ to it: for any $v,w \in V$, let
\begin{equation}
  b_\gamma(v,w) := b(v,w) + J(v,w). \label{eq:bvw-h}
\end{equation}
By using the trace inequality and inverse estimate, and noting \eqref{est:b:cont-coer}, it is easy to prove the following property 
(cf. \cite[Lemma 6.3]{dw15}).
\begin{lemma}\label{lem:g-norm}
  There exists a constant $\gamma_0 > 0$ such that if $-\gamma_0 \leq \gamma_{j,e} \lesssim 1$ for all $1\leq j \leq p$ and $e \in \mcE_h^I$, then for any $v_h,\,w_h \in V_h$, there hold
  \begin{align}
    \Re b_\gamma(v_h, v_h) &\eqsim \TV{v_h}_\gamma^2 \eqsim \TV{v_h}^2 \eqsim \Re b(v_h,v_h). \label{eq:bhvhvh}
  \end{align}
\end{lemma}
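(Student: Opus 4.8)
The plan is to establish the chain of equivalences by treating the three $\eqsim$ relations separately, working from the outside in, with the key observation that everything reduces to controlling the penalty term $J$ against the energy norm. The right-most equivalence $\TV{v_h}^2 \eqsim \Re b(v_h,v_h)$ is immediate from Lemma~\ref{lem:b}: the upper bound $\Re b(v_h,v_h) \le |b(v_h,v_h)| \lesssim \TV{v_h}^2$ is its continuity half, while the lower bound $\Re b(v_h,v_h) \gtrsim \TV{v_h}^2$ is exactly its coercivity half. So no new work is needed there.

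Second, I would prove the middle equivalence $\TV{v_h}_\gamma^2 \eqsim \TV{v_h}^2$. The inequality $\TV{v_h}^2 \le \TV{v_h}_\gamma^2$ is trivial from the definition \eqref{eq:norm-g}, since the jump contribution is nonnegative. For the reverse, the key is to bound each jump seminorm by the energy norm. On a reference element, the scaled trace inequality gives $\norm{\frac{\partial^j v_h}{\partial n_e^j}}_{L^2(e)}^2 \lesssim h_e^{-1}\norm{\nabla^j v_h}_{L^2(K)}^2$, the second trace term being absorbed by an inverse estimate since $v_h$ is piecewise polynomial; a further inverse estimate yields $\norm{\nabla^j v_h}_{L^2(K)} \lesssim h^{1-j}\norm{\nabla v_h}_{L^2(K)}$. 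Combining these and using $h\eqsim h_e\eqsim h_K$ gives $h_e^{2j-1}\norm{\Jp{\frac{\partial^j v_h}{\partial n_e^j}}}_{L^2(e)}^2 \lesssim \norm{\nabla v_h}_{L^2(\omega_e)}^2$ for each $1\le j\le p$, where $\omega_e$ is the union of the two elements sharing $e$. Summing over $j$ and over $e\in\mcE_h^I$, using the finite-overlap property of the $\omega_e$ and the bound $|\gamma_{j,e}|\lesssim 1$, produces a constant $C_J$ with $\sum_{j,e} h_e^{2j-1}\norm{\Jp{\frac{\partial^j v_h}{\partial n_e^j}}}_{L^2(e)}^2 \le C_J \norm{\nabla v_h}_{L^2(\mcD)}^2 \le C_J \TV{v_h}^2$, hence $\TV{v_h}_\gamma^2 \lesssim \TV{v_h}^2$.

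Third, the left equivalence $\Re b_\gamma(v_h,v_h) \eqsim \TV{v_h}_\gamma^2$. For the upper bound I would write $\Re b_\gamma(v_h,v_h) \le |b(v_h,v_h)| + |J(v_h,v_h)|$ and bound $|J(v_h,v_h)| \le \sum_{j,e}|\gamma_{j,e}| h_e^{2j-1}\norm{\Jp{\frac{\partial^j v_h}{\partial n_e^j}}}_{L^2(e)}^2 = \TV{v_h}_\gamma^2 - \TV{v_h}^2$, so the right-hand side is $\lesssim \TV{v_h}_\gamma^2$. For the lower bound, which is the crux, I would use that $\gamma_{j,e}\ge -\gamma_0$ implies term by term $\gamma_{j,e} h_e^{2j-1}\norm{\cdots}_{L^2(e)}^2 \ge -\gamma_0 h_e^{2j-1}\norm{\cdots}_{L^2(e)}^2$ regardless of the sign of $\gamma_{j,e}$; summing and invoking the constant $C_J$ from the previous step gives $J(v_h,v_h) \ge -\gamma_0 C_J \TV{v_h}^2$. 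With $c_0$ the coercivity constant from Lemma~\ref{lem:b} this yields $\Re b_\gamma(v_h,v_h) \ge (c_0 - \gamma_0 C_J)\TV{v_h}^2$; choosing $\gamma_0 := c_0/(2C_J)$ makes the bracket $\ge c_0/2 > 0$, so $\Re b_\gamma(v_h,v_h) \gtrsim \TV{v_h}^2 \eqsim \TV{v_h}_\gamma^2$ by the middle equivalence. This is what fixes $\gamma_0$ explicitly.

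The only genuinely technical ingredient is the scaled trace-plus-inverse estimate of the second paragraph, on which both the penalty absorption (middle equivalence) and the sign control of $J$ (choice of $\gamma_0$) rest. The subtlety is that the mesh consists of curvilinear/curved elements, so these estimates must be carried out on the reference element through the element maps $F_K$, using the regularity and quasi-uniformity assumptions $h\eqsim h_K\eqsim h_e$ to keep the constants uniform; this is routine but is precisely where the geometric hypotheses are used, and is the content imported from \cite[Lemma~6.3]{dw15}.
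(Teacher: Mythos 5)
Your proposal is correct and follows essentially the same route the paper takes (and merely sketches, citing \cite[Lemma~6.3]{dw15}): trace and inverse estimates to absorb the jump terms into $\TV{v_h}^2$, the continuity and coercivity of $b$ from Lemma~\ref{lem:b}, and a choice of $\gamma_0$ small relative to the coercivity constant. Your explicit fixing of $\gamma_0 = c_0/(2C_J)$ and the remark about carrying the trace/inverse estimates through the element maps $F_K$ on curved elements are exactly the details the paper leaves implicit.
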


Now we define the elliptic projection $P_h: H_0^1(\mcD) \to V_h$ by
\begin{align}
  b_\gamma(v_h, P_h v) = b(v_h, v) \quad \forall  v_h \in V_h. \label{eq:proj}
\end{align}
By using Lemma \ref{lem:g-norm} and the duality argument, we have the following estimates for the elliptic projection.
\begin{lemma}\label{lem:proj}
  Under the conditions of Lemma \ref{lem:g-norm} and $kh\lesssim 1$, there hold for $v\in H_0^1(\mcD)$,
  \begin{align}
    h^{-1}\|v - P_h v\|_{L^2(\mcD)} &\lesssim E_\gamma(v,P_h v) \lesssim \inf_{v_h \in V_h} E_\gamma(v, v_h), \label{eq:err-Phv}
  \end{align}
\end{lemma}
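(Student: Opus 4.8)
The plan is to run the classical C\'ea-plus-Aubin--Nitsche argument for the coercive form $b_\gamma$, with extra care for the interior penalty term $J$ and for which slot of the (non-symmetric) sesquilinear form each argument occupies. First I would record a Galerkin-type orthogonality. Writing $e := v - P_h v$, the definition \eqref{eq:proj} together with \eqref{eq:bvw-h} gives, for every $v_h \in V_h$,
\[
  b(v_h, e) = b(v_h, v) - b(v_h, P_h v) = b_\gamma(v_h, P_h v) - b(v_h, P_h v) = J(v_h, P_h v).
\]
Note that $b_\gamma(v_h, v)$ itself is never used, since $J$ is undefined for merely $H^1$ functions; existence and uniqueness of $P_h v$ follow from the coercivity in Lemma~\ref{lem:g-norm}.

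For the energy bound $E_\gamma(v, P_h v) \lesssim \inf_{v_h\in V_h} E_\gamma(v, v_h)$, I would fix an arbitrary $v_h \in V_h$, set $w_h := P_h v - v_h \in V_h$, and use coercivity (Lemma~\ref{lem:g-norm}) to get $\TV{w_h}_\gamma^2 \lesssim \Re b_\gamma(w_h, w_h)$. Keeping $w_h$ in the first slot, expanding $P_h v - v_h$ in the second, invoking the projection identity $b_\gamma(w_h, P_h v) = b(w_h, v)$, and using $b_\gamma = b + J$, one reaches $\Re b_\gamma(w_h, w_h) = \Re\big(b(w_h, v - v_h) - J(w_h, v_h)\big)$. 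The first term is $\lesssim \TV{w_h}\TV{v - v_h} \le \TV{w_h}_\gamma E_\gamma(v, v_h)$ by the continuity in Lemma~\ref{lem:b}, and the second is bounded the same way by a Cauchy--Schwarz inequality on the weighted penalty form. Hence $\TV{w_h}_\gamma \lesssim E_\gamma(v, v_h)$, and a triangle inequality applied separately to the energy part and the jump part of $E_\gamma$ gives $E_\gamma(v, P_h v) \lesssim E_\gamma(v, v_h) + \TV{w_h}_\gamma \lesssim E_\gamma(v, v_h)$; taking the infimum yields the second inequality.

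The $L^2$ estimate is an Aubin--Nitsche duality argument. I would let $z \in H^2(\OchO) \cap H_0^1(\mcD)$ solve the \emph{forward} elliptic problem $\mcL_N z = e$, i.e.\ $b(z, w) = (e, w)$ for all $w \in H_0^1(\mcD)$; Lemma~\ref{lem:LN} with $m = 0$ then gives $\|z\|_{H^2(\OchO)} \lesssim \|e\|_{L^2(\mcD)}$. Choosing $w = e$ yields $\|e\|_{L^2(\mcD)}^2 = b(z, e)$, and inserting an approximation $z_h \in V_h$ together with the orthogonality $b(z_h, e) = J(z_h, P_h v)$ gives
\[
  \|e\|_{L^2(\mcD)}^2 = b(z - z_h, e) + J(z_h, P_h v).
\]
By Lemma~\ref{lem:approx} with $s = 2$ one picks $z_h$ so that $E_\gamma(z, z_h) \lesssim h\|z\|_{H^2(\OchO)} \lesssim h\|e\|_{L^2(\mcD)}$, which simultaneously controls $\TV{z - z_h}$ and the penalty seminorm of $z_h$. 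Then the continuity of $b$ bounds $|b(z - z_h, e)| \lesssim \TV{z-z_h}\,E_\gamma(v, P_h v)$, while Cauchy--Schwarz on $J$ bounds $|J(z_h, P_h v)|$ by the product of the penalty seminorms of $z_h$ and of $P_h v$, of sizes $h\|e\|_{L^2(\mcD)}$ and $E_\gamma(v, P_h v)$ respectively. Altogether $\|e\|_{L^2(\mcD)}^2 \lesssim h\|e\|_{L^2(\mcD)}\,E_\gamma(v, P_h v)$, which is the first inequality after dividing by $h\|e\|_{L^2(\mcD)}$.

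The step I expect to be most delicate is the bookkeeping of the penalty contributions: the orthogonality is not exact but carries the residual $J(v_h, P_h v)$, so both estimates must absorb penalty seminorms. The observation that makes this clean is that the penalty seminorm of any $V_h$-function is already part of $E_\gamma$, so Lemma~\ref{lem:approx} controls $J(z_h, z_h)^{1/2}$ directly by $h\|z\|_{H^2(\OchO)}$, without needing the jumps of the only $H^2$-regular dual solution $z$ to vanish. One must also track which slot of $b$ each argument occupies, so that the dual problem is the forward problem $\mcL_N z = e$ covered by Lemma~\ref{lem:LN} rather than its adjoint.
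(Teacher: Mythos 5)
Your proposal is correct and follows essentially the same route as the paper's proof: the identical orthogonality relation $b(v_h, v - P_h v) = J(v_h, P_h v)$, the same coercivity-based C\'ea-type argument (with the weighted Cauchy--Schwarz on $J$) for the energy bound, and the same Aubin--Nitsche duality using the forward problem $\mcL_N z = v - P_h v$ with the $H^2$-regularity from Lemma~\ref{lem:LN} ($m=0$) and Lemma~\ref{lem:approx} with $s=2$. Your explicit remarks on slot bookkeeping and on why the "dual" problem is in fact the forward problem covered by Lemma~\ref{lem:LN} are exactly the implicit choices made in the paper.
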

\begin{proof}
  By \eqref{eq:bvw-h} and \eqref{eq:proj},
  \begin{equation}
    b(v_h, v - P_h v) = J(v_h,P_h v) \quad \forall  v_h \in V_h. \label{eq:ortho-b}
  \end{equation}
  Denote by $\varphi_h = v_h - P_h v$. It follows from Lemma \ref{lem:g-norm}, \eqref{eq:norm-g}--\eqref{eq:E-g}, and the H{\"o}lder inequality that
  \begin{align*}
    \TV{\varphi_h}_\gamma^2 &\eqsim \Re b_\gamma(\varphi_h,\varphi_h)= \Re b(\varphi_h,v_h-v) + J(\varphi_h,v_h) \lesssim E_\gamma(v,v_h) \TV{\varphi_h}_\gamma,
  \end{align*}
  which yields the second inequality in \eqref{eq:err-Phv} by noting the triangle inequality 
  \[
    E_\gamma(v,P_h v) \leq E_\gamma(v,v_h) + \TV{v_h-P_h v}_\gamma.
  \]

  Next, we consider the dual problem: find $w \in H_0^1(\mcD)$ such that
  \begin{align}\label{eq:dual:Ph}
    b(w,\varphi) = (v-P_h v,\varphi) \quad \forall  \varphi \in H_0^1(\mcD).
  \end{align}
  By \eqref{est:LNw} in Lemma \ref{lem:LN} with $m=0$, the following regularity for $w$ holds
  \begin{align}\label{est:wH2}
    \|w\|_{H^2(\OchO)} \lesssim \|v-P_h v\|_{L^2(\mcD)}.
  \end{align}
  Let $\varphi = v-P_h v$ in \eqref{eq:dual:Ph}, by using Lemma \ref{lem:approx} (with $s=2$) and \eqref{est:wH2}, we obtain
  \begin{align}\label{est:Ph}
    \|v - P_h v\|_{L^2(\mcD)}^2 &= b(w, v-P_h v)\notag \\
    &= \inf_{v_h \in V_h} \big\{b(w-v_h, v - P_h v) + J(v_h,P_h v)\big\}\notag\\
    &\lesssim E_\gamma(v, P_h v) \inf_{v_h \in V_h} E_\gamma(w, v_h) \\
    &\lesssim E_\gamma(v, P_h v) h \|v-P_hv\|_{L^2(\mcD)} \notag
  \end{align}
  which yields the first inequality in \eqref{eq:err-Phv}. The proof of this lemma is completed.
\end{proof}


The following lemma gives an estimate of the $T_N$-truncation of the error of elliptic projection.
\begin{lemma}\label{lem:proj:PML2}
Under the conditions of Lemmas \ref{lem:conv} and \ref{lem:g-norm}. 
Suppose $kh \lesssim 1$ and let $v\in H_0^1(\mcD)$,
there holds
  \begin{align}
    \|T_N(v - P_h v)\|_{L^2({\mcD})} &\lesssim k^{p-1}h^p\inf_{v_h \in V_h} E_\gamma(v, v_h). \label{est:TN-Ph}
  \end{align}
\end{lemma}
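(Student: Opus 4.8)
The plan is to run an Aubin--Nitsche duality argument against the \emph{smooth} function $T_N\psi$, where $\psi := v - P_h v$, and to exploit the idempotency of the spectral projection $T_N$ so that the dual solution's regularity is controlled by $\|T_N\psi\|_{L^2(\mcD)}$ itself; this is what makes the final absorption work. First I would record that $T_N$ is self-adjoint on $L^2(\mcD)$ with $T_N^2 = T_N$ (being the $L^2$-orthogonal projection onto $\mathrm{span}\{\phi_1,\dots,\phi_N\}$), so that the cross term in $(T_N\psi,\psi)$ vanishes and $\|T_N\psi\|_{L^2(\mcD)}^2 = (T_N\psi,\psi)$. I would then introduce the dual problem: find $w \in H_0^1(\mcD)$ with $b(w,\varphi) = (T_N\psi,\varphi)$ for all $\varphi \in H_0^1(\mcD)$, solvable by the coercivity of $b$ in Lemma \ref{lem:b}, so that taking $\varphi = \psi$ gives $\|T_N\psi\|_{L^2(\mcD)}^2 = b(w,\psi)$.

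The crucial step is the regularity of $w$. Since $T_N\psi = T_N(T_N\psi)$, applying \eqref{TNu} with $u = T_N\psi$ yields $\|T_N\psi\|_{H^j(\OchO)} \lesssim k^j\|T_N\psi\|_{L^2(\mcD)}$ for every $j$, whence $C_{p-1,\,T_N\psi} = \sum_{j=0}^{p-1} k^{-j}\|T_N\psi\|_{H^j(\OchO)} \lesssim \|T_N\psi\|_{L^2(\mcD)}$. Invoking Lemma \ref{lem:LN} for the dual problem with $m = p-1$ then gives $\|w\|_{H^{p+1}(\OchO)} \lesssim k^{p-1}\|T_N\psi\|_{L^2(\mcD)}$. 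Bounding $\|w\|_{H^{p+1}}$ by $\|T_N\psi\|_{L^2}$, rather than by the rougher $\|\psi\|_{L^2}$, is the whole point: a naive duality that estimates $\|w\|$ in terms of $\|\psi\|$ loses a factor $(kh)^{(p-1)/2}$ and fails to reach the stated bound.

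Next I would use the Galerkin orthogonality \eqref{eq:ortho-b} with $v_h = P_h w$ to split $b(w,\psi) = b(w - P_h w, \psi) + J(P_h w, P_h v)$. The first term is handled by the continuity of $b$ (Lemma \ref{lem:b}) and \eqref{eq:err-Phv} via $|b(w - P_h w, \psi)| \lesssim \TV{w - P_h w}\,\TV{\psi} \lesssim E_\gamma(w,P_h w)\,E_\gamma(v,P_h v)$; the penalty term is controlled the same way after a Cauchy--Schwarz over the jump seminorms, which are precisely the jump parts of $E_\gamma(w,P_h w)$ and $E_\gamma(v,P_h v)$. Applying Lemma \ref{lem:approx} with $s = p+1$ together with the regularity above gives $E_\gamma(w,P_h w) \lesssim \inf_{v_h \in V_h} E_\gamma(w,v_h) \lesssim h^p\|w\|_{H^{p+1}(\OchO)} \lesssim k^{p-1}h^p\|T_N\psi\|_{L^2(\mcD)}$, while \eqref{eq:err-Phv} gives $E_\gamma(v,P_h v) \lesssim \inf_{v_h \in V_h} E_\gamma(v,v_h)$.

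Collecting these estimates yields $\|T_N\psi\|_{L^2(\mcD)}^2 \lesssim k^{p-1}h^p\,\|T_N\psi\|_{L^2(\mcD)}\,\inf_{v_h \in V_h} E_\gamma(v,v_h)$, and dividing by $\|T_N\psi\|_{L^2(\mcD)}$ gives \eqref{est:TN-Ph}. The only subtlety I anticipate is justifying that the dual (adjoint) problem inherits the same regularity theory as Lemma \ref{lem:LN}; this is legitimate because its principal part involves the symmetric positive definite $A_r$, so the shift estimate of Lemma \ref{lem:high-regularity} applies to the adjoint exactly as in the proof of Lemma \ref{lem:LN}. The key conceptual point to keep track of is that idempotency makes the bound homogeneous of degree one in $\|T_N\psi\|_{L^2(\mcD)}$, which is what permits the final absorption.
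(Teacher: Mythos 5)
Your argument is correct and is essentially the paper's own proof: the same dual problem $b(w,\varphi) = (T_N(v-P_hv),\varphi)$ for all $\varphi\in H_0^1(\mcD)$, the same regularity bound $\|w\|_{H^{p+1}(\OchO)} \lesssim k^{p-1}\|T_N(v-P_hv)\|_{L^2(\mcD)}$ via Lemmas \ref{lem:LN} and \ref{lem:TN} (where the paper, like you, implicitly uses $T_N^2=T_N$), and the same combination of the orthogonality \eqref{eq:ortho-b} with Lemmas \ref{lem:proj} and \ref{lem:approx} before absorbing one factor of $\|T_N(v-P_hv)\|_{L^2(\mcD)}$. Your closing worry about adjoint regularity is moot: since $w$ occupies the \emph{first} argument of $b$, the dual problem is literally $\mcL_N w = T_N(v-P_hv)$, i.e., exactly the problem covered by Lemma \ref{lem:LN}, so no adjoint theory is needed.
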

\begin{proof}
  We consider the dual problem: Find $w\in H_0^1(\mcD)$ such that
  \[
    b(w,\varphi) = (T_N(v - P_h v),\varphi) \quad \forall \varphi \in H_0^1(\mcD).
  \]
  Then, similar to \eqref{est:Ph}, we have 
  \begin{align}
    \|T_N(v - P_h v)\|_{L^2({\mcD})}^2 &= b(w, v-P_h v) \notag \\
    &\lesssim E_\gamma(v,P_h v)\inf_{v_h \in V_h} E_\gamma(w, v_h)\notag\\
    &\lesssim \inf_{v_h \in V_h} E_\gamma(v, v_h) h^{p}\|w\|_{H^{p+1}(\OchO)}, \label{est:err-3}
  \end{align}
  where we have used Lemmas~\ref{lem:proj} and \ref{lem:approx} to derive the last inequality. 
  Moreover, by Lemma \ref{lem:LN} and Lemma \ref{lem:TN}, we get
  \begin{align*}
    \|w\|_{H^{p+1}(\OchO)} \lesssim k^{p-1} \sum_{j=0}^{p-1} k^{-j}\|T_N(v - P_h v)\|_{H^j(\OchO)} \lesssim k^{p-1} \|T_N(v - P_h v)\|_{L^2(\mcD)},
  \end{align*}
  which together with \eqref{est:err-3} implies \eqref{est:TN-Ph} and concludes the proof of this lemma.
\end{proof}

\subsection{Preasymptotic error estimates}
Next we give the preasymptotic error estimates for the CIP-FEM \eqref{eq:FEM} as follows.
\begin{theorem}\label{thm:err:FEM}
  Under the conditions of Lemmas \ref{lem:conv} and \ref{lem:g-norm}, let $\hu$ and $u_h$ be the truncated PML solution to \eqref{eq:PML1}--\eqref{eq:PML2} and the CIP-FE solution to \eqref{eq:FEM}, respectively. Then there exists a positive constant $C_p$ independent of $k$, $h$, and $f$, but may depend on $p$, such that if 
  \begin{equation}
    k^{2p+1} h^{2p} \leq C_p, \label{eq:cond:pre}
  \end{equation}
  then the following error estimates hold
  \begin{align}
    E_\gamma(\hu, u_h) &\lesssim (1 + k(kh)^{p}) \inf_{v_h \in V_h} E_\gamma(\hu, v_h), \label{eq:FEM:err-1} \\
    \|\hu-u_h\|_{L^2(\mcD)} &\lesssim (h + (kh)^p) \inf_{v_h \in V_h} E_\gamma(\hu, v_h). \label{eq:FEM:err-2}
  \end{align}
  Furthermore, if $f|_\Omega\in H^{p-1}(\Omega)$, then
  \begin{align}
    \|{u - u_h}\|_{H^1(\Omega)}
    &\lesssim (kh)^p \hat C_{p-1,f}^{(p)} + k(kh)^{2p} \|f\|_{L^2(\Omega)} + \mathcal{E}^{\rm PML} \|f\|_{L^2(\Omega)}\label{eq:FEM:err-1a},\\
    k\|{u - u_h}\|_{L^2(\Omega)}&\lesssim  (kh)^{p+1} \hat C_{p-1,f}^{(p-1)} + k(kh)^{2p} \|f\|_{L^2(\Omega)}+ \mathcal{E}^{\rm PML} \|f\|_{L^2(\Omega)}, \label{eq:FEM:err-2a}
  \end{align}
  where $\hat C^{(t)}_{s,f}:=(k^{-1}+(kh)^t)C_{s,f} + \|f\|_{L^2(\Omega)}$ with $C_{s,f}:=\sum_{j=0}^{s} k^{-j} \| f \|_{H^{j}(\Omega)}$ and $\mathcal{E}^{\rm PML}$ is defined in \eqref{eq:err-PML}.
\end{theorem}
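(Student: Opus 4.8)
The plan is to deduce the four displayed estimates from two master estimates for the truncated PML solution, namely \eqref{eq:FEM:err-1} and \eqref{eq:FEM:err-2}, and to concentrate the work on those. For the final bounds \eqref{eq:FEM:err-1a}--\eqref{eq:FEM:err-2a} I would split $u-u_h=(u-\hu)+(\hu-u_h)$: the first part is controlled on $\Omega$ by the PML convergence Lemma \ref{lem:conv} together with a stability estimate $\|u\|_{H^{1/2}(\Gamma)}\lesssim\|f\|_{L^2(\Omega)}$ for the scattering problem, producing the terms $\mathcal{E}^{\rm PML}\|f\|_{L^2(\Omega)}$, while the second part is bounded by $E_\gamma(\hu,u_h)$ and $\|\hu-u_h\|_{L^2(\mcD)}$. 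Since $\hu$ solves \eqref{eq:PML3} with $g=f$ and $f$ vanishes in the layer, Lemma \ref{lem:approx2} (with $m=p-1$, so $s=p$) gives $\mathcal{I}:=\inf_{v_h\in V_h}E_\gamma(\hu,v_h)\lesssim(kh)^p\big(k^{-1}C_{p-1,f}+\|f\|_{L^2(\Omega)}\big)$; substituting this into \eqref{eq:FEM:err-1}--\eqref{eq:FEM:err-2} and expanding reproduces exactly the right-hand sides of \eqref{eq:FEM:err-1a}--\eqref{eq:FEM:err-2a}.

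To prove \eqref{eq:FEM:err-1} I would use the elliptic projection of \eqref{eq:proj} and write $\hu-u_h=\eta+\chi$ with $\eta:=\hu-P_h\hu$ and $\chi:=P_h\hu-u_h\in V_h$. By Lemma \ref{lem:proj} one has $E_\gamma(\hu,P_h\hu)\lesssim\mathcal{I}$, so it remains to estimate $\TV{\chi}$. Because $\hu$ is smooth enough for its interelement jumps to vanish, $a_\gamma(\hu,v_h)=(f,v_h)=a_\gamma(u_h,v_h)$, i.e. $a_\gamma(\hu-u_h,v_h)=0$ for all $v_h\in V_h$. This Galerkin orthogonality (with $\chi$ as test function) gives $a_\gamma(\chi,\chi)=-a_\gamma(\eta,\chi)$; combining it with the decomposition $b_\gamma=a_\gamma+2k^2(T_N\,\cdot\,,\cdot)$ from \eqref{eq:buv}--\eqref{eq:bvw-h}, and using that $T_N$ is a self-adjoint idempotent so that $(T_N\chi,\chi)=\|T_N\chi\|_{L^2(\mcD)}^2$, I obtain the identity
\[
  \Re b_\gamma(\chi,\chi)=-\Re a_\gamma(\eta,\chi)+2k^2\|T_N\chi\|_{L^2(\mcD)}^2.
\]
The coercivity of $b_\gamma$ on $V_h$ (Lemma \ref{lem:g-norm}) and the continuity of $a_\gamma$ then give $\TV{\chi}^2\lesssim\mathcal{I}\,\TV{\chi}+k^2\|T_N\chi\|_{L^2(\mcD)}^2$, whence
\[
  \TV{\chi}\lesssim\mathcal{I}+k\|T_N\chi\|_{L^2(\mcD)}.
\]
Everything now reduces to controlling the pollution quantity $k\|T_N\chi\|_{L^2(\mcD)}$.

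This is the main obstacle. I would bound it by a duality argument in the spirit of Lemma \ref{lem:proj:PML2}: from $\|T_N\chi\|^2=(T_N\chi,\chi)$ and a dual problem for $b$ with data of size $\|T_N\chi\|$, the key gain is that the dual solution enjoys the higher regularity $\|z\|_{H^{p+1}(\OchO)}\lesssim k^{p-1}\|T_N\chi\|_{L^2(\mcD)}$ coming from Lemmas \ref{lem:LN} and \ref{lem:TN}, so approximating it to order $h^p$ and using the Galerkin and elliptic-projection orthogonalities \eqref{eq:ortho-b} to dispose of the consistency terms yields the saving $\|T_N\chi\|_{L^2(\mcD)}\lesssim(kh)^p\big(k^{-1}\TV{\chi}+\mathcal{I}\big)$, hence $k\|T_N\chi\|_{L^2(\mcD)}\lesssim(kh)^p\TV{\chi}+k(kh)^p\mathcal{I}$. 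Feeding this back gives $\TV{\chi}\lesssim\mathcal{I}+(kh)^p\TV{\chi}+k(kh)^p\mathcal{I}$. The mesh condition \eqref{eq:cond:pre} is precisely what forces $(kh)^p\le\sqrt{C_p}$ uniformly in $k\ge1$, so by taking $C_p$ small I can absorb the term $(kh)^p\TV{\chi}$ into the left-hand side and conclude $\TV{\chi}\lesssim(1+k(kh)^p)\mathcal{I}$, which is \eqref{eq:FEM:err-1}. The delicate points are the correct bookkeeping of the non-symmetry of the forms in the duality (keeping track of which argument carries the test function, and of the coupling introduced by the $2k^2T_N$ term inside $b$) and this final absorption.

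Finally, \eqref{eq:FEM:err-2} follows from an Aubin--Nitsche duality: solving $b(w,\varphi)=(\hu-u_h,\varphi)$, invoking the $H^2$-regularity of Lemma \ref{lem:LN} (with $m=0$) and the approximation Lemma \ref{lem:approx} with $s=2$, one gains an extra power of $h$ and, after inserting \eqref{eq:FEM:err-1} and using $kh\lesssim1$, arrives at $\|\hu-u_h\|_{L^2(\mcD)}\lesssim(h+(kh)^p)\mathcal{I}$. Assembling \eqref{eq:FEM:err-1}, \eqref{eq:FEM:err-2}, Lemma \ref{lem:approx2}, and Lemma \ref{lem:conv} as in the first paragraph then yields \eqref{eq:FEM:err-1a}--\eqref{eq:FEM:err-2a} and completes the proof.
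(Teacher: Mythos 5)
Your architecture is sound and lands on the same estimates, but it is a genuinely different arrangement from the paper's, built from the same toolkit (the elliptic projection \eqref{eq:proj}, the regularizing operator $T_N$, and Lemmas \ref{lem:approx2}, \ref{lem:proj}, \ref{lem:proj:PML2}, \ref{lem:LN}, \ref{lem:TN}). The paper proves the $L^2$ bound \eqref{eq:FEM:err-2} \emph{first}: after a coercivity step equivalent to your reduction $\TV{\chi}\lesssim\mathcal{I}+k\|T_N\cdot\|_{L^2(\mcD)}$ (the paper works with $\hu-u_h$ directly rather than splitting off $\eta=\hu-P_h\hu$), it runs a single modified duality with the PML dual problem $a(v,z)=(v,\hu-u_h)$, uses $P_hz$ together with Lemma \ref{lem:proj:PML2} to produce the term $(k^{p+1}h^{p+1}+k^{2p+1}h^{2p})\|\hu-u_h\|_{L^2(\mcD)}$, absorbs it under \eqref{eq:cond:pre}, and only then deduces \eqref{eq:FEM:err-1} from \eqref{eq:FEM:err-2} via $k\|T_N(\hu-u_h)\|_{L^2(\mcD)}\le k\|\hu-u_h\|_{L^2(\mcD)}$. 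You invert this: a dedicated duality for the pollution quantity $\|T_N\chi\|_{L^2(\mcD)}$ gives \eqref{eq:FEM:err-1} first, and \eqref{eq:FEM:err-2} then follows by a separate Aubin--Nitsche with the coercive form $b$ (Lemma \ref{lem:LN} with $m=0$ and Lemma \ref{lem:approx} with $s=2$). Your route is viable, and arguably tidier in that \eqref{eq:FEM:err-1} is not parasitic on \eqref{eq:FEM:err-2}: the dual data $T_N\chi$ lies in the span of the first $N$ eigenfunctions, so Lemmas \ref{lem:LN} and \ref{lem:TN} indeed give $\|z\|_{H^{p+1}(\OchO)}\lesssim k^{p-1}\|T_N\chi\|_{L^2(\mcD)}$ as you claim, and your first-paragraph reduction of \eqref{eq:FEM:err-1a}--\eqref{eq:FEM:err-2a} (Lemma \ref{lem:approx2} with $m=p-1$, $s=p$, plus Lemma \ref{lem:conv} and scattering stability) is exactly what the paper does and omits.

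Two points need repair. First, your orthogonality $a_\gamma(\hu-u_h,v_h)=0$ presupposes that the jumps $[\partial^j\hu/\partial n_e^j]$, $1\le j\le p$, exist and vanish. For \eqref{eq:FEM:err-1}--\eqref{eq:FEM:err-2} the source is only $f\in L^2$, so $\hu$ is merely piecewise $H^2$ and $J(\hu,v_h)$ is not even defined when $p\ge2$; this is precisely why the paper states Galerkin orthogonality as $a(\hu-u_h,v_h)=J(u_h,v_h)$ and measures errors with the one-sided quantity $E_\gamma$ of \eqref{eq:E-g} instead of $\TV{\hu-u_h}_\gamma$. Your identities survive once every $J(\eta,\cdot)$ is rewritten via $-J(P_h\hu,\cdot)$-type terms, so the fix is cosmetic but necessary. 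Second, your accounting of where \eqref{eq:cond:pre} bites is off: it is not mainly for absorbing $(kh)^p\TV{\chi}$, which needs only $kh\lesssim1$. In the duality for $\|T_N\chi\|_{L^2(\mcD)}^2$ the dangerous contribution is $2k^2(T_N\chi,\,z-P_hz)$: bounded naively by $2k^2\|T_N\chi\|_{L^2(\mcD)}\|z-P_hz\|_{L^2(\mcD)}$ it costs $k^{p+2}h^{p+1}\|T_N\chi\|_{L^2(\mcD)}^2$, i.e.\ a stronger mesh condition than \eqref{eq:cond:pre}; only after writing $(T_N\chi,z-P_hz)=(T_N\chi,T_N(z-P_hz))$ and applying Lemma \ref{lem:proj:PML2} to the \emph{dual} solution does it become $\lesssim k^{2p+1}h^{2p}\|T_N\chi\|_{L^2(\mcD)}^2$, absorbable exactly under \eqref{eq:cond:pre}. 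Relatedly, if you dualize with $b$ rather than $a$, the leftover term $2k^2(T_N\chi,z)$ admits no $h^p$ gain at all and the argument collapses, so the ``bookkeeping of the coupling introduced by $2k^2T_N$'' that you flag is the actual crux, and it is where the paper deploys Lemma \ref{lem:proj:PML2}. With these two repairs your plan closes, in fact yielding the slightly stronger intermediate bound $\|T_N\chi\|_{L^2(\mcD)}\lesssim(kh)^p\mathcal{I}$.
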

\begin{proof} \eqref{eq:FEM:err-1a} (resp. \eqref{eq:FEM:err-2a}) is a direct consequence of Lemmas \ref{lem:approx2} and \ref{lem:conv} and \eqref{eq:FEM:err-1} (resp. \eqref{eq:FEM:err-2}). We omit the details.
The remainder of the proof is divided into two steps.

  {\it Step 1: Estimating the $H^1$-error by the $L^2$-error.} First, from \eqref{eq:vp} and \eqref{eq:FEM}, the following Galerkin orthogonality holds
  \begin{equation}
    a(\hu - u_h, v_h) = J(u_h, v_h) \quad \forall  v_h \in V_h, \label{eq:ortho}
  \end{equation}
  which together with \eqref{est:b:cont-coer} and \eqref{eq:buv} implies that 
  \begin{align*}
    \TV{\hu-u_h}^2 &\lesssim \Re b(\hu-u_h,\hu-u_h) \\
    &= \Re a(\hu-u_h,\hu-u_h) + 2k^2  \|T_N(\hu-u_h)\|_{L^2(\mcD)}^2 \\
    &= \Re a(\hu-u_h,\hu-v_h) + \Re J(u_h,v_h-u_h) + 2k^2 \|T_N(\hu-u_h)\|_{L^2(\mcD)}^2,
  \end{align*}
  hence,
  \begin{align*}
    \TV{\hu-u_h}^2 +\Re J(u_h, u_h) \lesssim E_\gamma(\hu,u_h) \inf_{v_h \in V_h} E_\gamma(\hu, v_h) + 2k^2 \|T_N(\hu-u_h)\|_{L^2(\mcD)}^2,
  \end{align*}
  which gives
  \begin{align}
    E_\gamma(\hu, u_h) \lesssim \inf_{v_h \in V_h} E_\gamma(\hu, v_h) + k \|T_N(\hu-u_h)\|_{L^2(\mcD)}. \label{est:err:E}
  \end{align}
  {\it Step 2: Estimating the $L^2$-error by using the modified duality argument \cite{zw13}.}
  Let  $z\in  H_0^1(\mcD)$ be the solution to the following dual problem
  \[
    a(v,z) = (v,\hu-u_h) \quad \forall v \in H_0^1(\mcD).
  \]
  Using \eqref{eq:ortho},  \eqref{eq:ortho-b}, and Lemmas~\ref{lem:proj} and \ref{lem:proj:PML2}, we have
  \begin{equation}
    \label{est:err-1}
    \begin{aligned}
      \|\hu-u_h\|_{L^2(\mcD)}^2 &= a(\hu-u_h, z-P_h z) + J(u_h,P_h z)  \\
      &= b(\hu-v_h,z-P_h z) + J(v_h,P_h z) - 2k^2  (T_N(\hu-u_h),z-P_h z)  \\
      &= b(\hu-v_h,z-P_h z) + J(v_h,P_h z) - 2k^2  (\hu-u_h,T_N(z-P_h z))  \\
      &\lesssim E_\gamma(z,P_h z) \inf_{v_h\in V_h} E_\gamma(\hu, v_h) + k^2 \|\hu-u_h\|_{L^2(\mcD)}\|T_N(z-P_h z)\|_{L^2(\mcD)} \\
      &\lesssim \Big(\inf_{v_h\in V_h} E_\gamma(\hu, v_h) + k^{p+1}h^p \|\hu-u_h\|_{L^2(\mcD)}\Big)\inf_{v_h \in V_h} E_\gamma(z, v_h).
    \end{aligned}
  \end{equation}
  Noting from Lemma \ref{lem:approx2} that
  \begin{align}
    \inf_{v_h \in V_h} E_\gamma(z, v_h) \lesssim (h+(kh)^p) \|\hu - u_h\|_{L^2(\mcD)}. \label{est:err-2}
  \end{align}
  By combining \eqref{est:err-1}--\eqref{est:err-2}, we get
  \[
    \|\hu - u_h\|_{L^2(\mcD)} \lesssim (h+(kh)^p) \inf_{v_h \in V_h} E_\gamma(\hu,v_h) + (k^{p+1}h^{p+1}+k^{2p+1}h^{2p}) \|\hu - u_h\|_{L^2(\mcD)},
  \]
  which implies that if $k^{2p+1}h^{2p}$ is sufficiently small, then
  \begin{align}
    \|\hu - u_h\|_{L^2(\mcD)} \lesssim (h+(kh)^p) \inf_{v_h \in V_h} E_\gamma(\hu,v_h), \label{est:err:L2}
  \end{align}
  which is \eqref{eq:FEM:err-2}. Moreover, substituting \eqref{est:err:L2} into \eqref{est:err:E} gives \eqref{eq:FEM:err-1}.
This completes the proof of the theorem.
\end{proof}

\begin{remark}\label{rmk:err:FEM}
  \begin{enumerate}[\rm(i)]
    \item The preasymptotic $H^1$-error in \eqref{eq:FEM:err-1a} is bounded by three terms. The first term $O((kh)^p)$ is the same of the order as the interpolation error. 
    The second term $O(k(kh)^{2p})$ refers to the pollution error, which is reduced as the order $p$ of elements increases. The last term $\mcE^{\rm PML}$ is from the PML truncation and is exponentially small.
    
    \item Let us take a closer look at the dependences on source $f$ in \eqref{eq:FEM:err-1a}. 
    Firstly, the coefficient of the pollution term $O(k(kh)^{2p})$ depends only on the $L^2$-norm of $f$. 
    Secondly, the coefficient of the term $O((kh)^p)$ depends on $\hat C_{p-1,f}^{(p)}$, which consists of three parts, i.e., $k^{-1} C_{p-1, f}$, $\|f\|_{L^2(\Omega)}$, and $(kh)^p C_{p-1, f}$. 
    The first and second parts 
    refer to the interpolations of elliptic part $\hat u_\mcE$ and analytic part $\hat u_\mcA$, respectively, and the third part refers to the ``pollution'' error of $\hat u_\mcE$, which is smaller than the interpolation of $\hat u_\mcE$ if $k(kh)^p \lesssim 1$. 
    In addition, it is clear that $\hat C_{p-1,f}^{(p)} \lesssim C_{p-1,f}$ if $kh\lesssim 1$, 
    and $\hat C_{p-1,f}^{(p)} \lesssim k^{-\frac12}C_{p-1,f} + \|f\|_{L^2(\Omega)}$ if $k(kh)^{2p}\lesssim 1$, in particular, if \eqref{eq:cond:pre} holds.
    Compared with the estimate obtained in \cite{gs23} for PML problems with $C^{p,1}\cap C^3$ scaling function, whose coefficients in both terms depend on $C_{p-1,f}$ (see \cite[Theorem~4.9 and the proof of Theorem~1.5]{gs23}). Clearly, due to our improved regularity estimate in \eqref{est:wE-wA2}, the dependences of our estimate on the source term are weaker.
    
    \item If $\gamma_{j,e} \equiv 0$, then CIP-FEM becomes FEM. The results in Theorem \ref{thm:err:FEM}  hold for FEM as well.
  \end{enumerate} 
\end{remark}

\section{Numerical experiments}\label{sec:test}
In this section we simulate the Helmholtz problem \eqref{eq:Helm}--\eqref{eq:Somm}, which is first truncated by the PML \eqref{eq:PML1}--\eqref{eq:PML2} and then discretized by the FEM and CIP-FEM \eqref{eq:FEM}. All the codes are written in \href{https://www.firedrakeproject.org/}{Firedrake} \cite{Rathgeber2016}. 

Let the source be the cut-off function $f=1$ in $\mcB_R$ and $f=0$ outside $\mcB_R$.
Then the exact solutions of the Helmholtz equation \eqref{eq:Helm}--\eqref{eq:Somm} in $\bbR^{2}$ (cf. \cite{lw19}) are given by
\begin{alignat*}{2}
  u &= 
  \begin{cases}
    \frac{\mbi \pi R}{2k} H_1^{(1)}(kR) J_0(k|x|) - \frac{1}{k^2} & \mbox{in }\mcB_R, \\
    \frac{\mbi \pi R}{2k} J_1(kR) H_0^{(1)}(k|x|) & \mbox{otherwise.}
  \end{cases} 
\end{alignat*}
The domain $\Omega=\mcB_1$ is considered and the thickness of PML is chosen as $L=1$.
A sample mesh with size $h \approx 0.268$ is shown in the left of Figure \ref{fig:PML2D}.
The PML parameter is set by $\sigma_0 = 4$ so that the condition \eqref{PML:cond} is satisfied as long as $k \geq 2.5$. The following penalty parameters,  which are obtained in \cite{han12,zhou23,zw23} by a dispersion analysis for Helmholtz equation in $\mathbb{R}^2$ on equilateral triangulations, are used in the CIP-FEM to reduce the pollution errors. 
\begin{subequations}\label{eq:opt}
\begin{alignat}{2}
  &\qquad\gamma^{\rm opt} = -\tfrac{\sqrt{3}}{24} - \tfrac{\sqrt{3}}{1728}(kh)^2, &\quad&\mbox{for }p = 1; \\
  &\begin{cases}&\gamma^{\rm opt}_1 = -\tfrac{\sqrt{3}}{60} - \tfrac{97\sqrt{3}}{40320}\Sp{\tfrac{kh}{2}}^2,  \\
  &\gamma^{\rm opt}_2 = -\tfrac{\sqrt{3}}{1920} + \tfrac{3\sqrt{3}}{71680}\Sp{\tfrac{kh}{2}}^2,
  \end{cases} &\quad&\mbox{for }p = 2; \\
  &\begin{cases}&\gamma^{\rm opt}_1 = -0.017265294884296 - 0.000478304250473\Sp{\tfrac{kh}{3}}^2,  \\
  &\gamma^{\rm opt}_2 = -0.000192140229447 + 0.000015577502211\Sp{\tfrac{kh}{3}}^2,  \\
  &\gamma^{\rm opt}_3 = -0.000001264275697 + 0.000000540251047\Sp{\tfrac{kh}{3}}^2,
  \end{cases} &\quad&\mbox{for }p = 3;  
\end{alignat}
\end{subequations}

In the first example, we let $k=100$ and choose a mesh satisfying $kh/p \approx 1.5$, the right graph of Figure \ref{fig:PML2D} plots the plane-sections of the real parts of the exact solution and discrete solutions with $y=0$ for $p=1,2$. It shows clearly that the PML solutions decay rapidly to near $0$ outside $\Omega$. In addition, the solution of FEM has a significant phase error, whereas the solution of CIP-FEM fits the exact solution much better than that of FEM, whether for linear element or second-order element. 
\begin{figure}[t]
  \centering
  \begin{minipage}{0.4\textwidth}
    \centering
    \includegraphics[height=5cm]{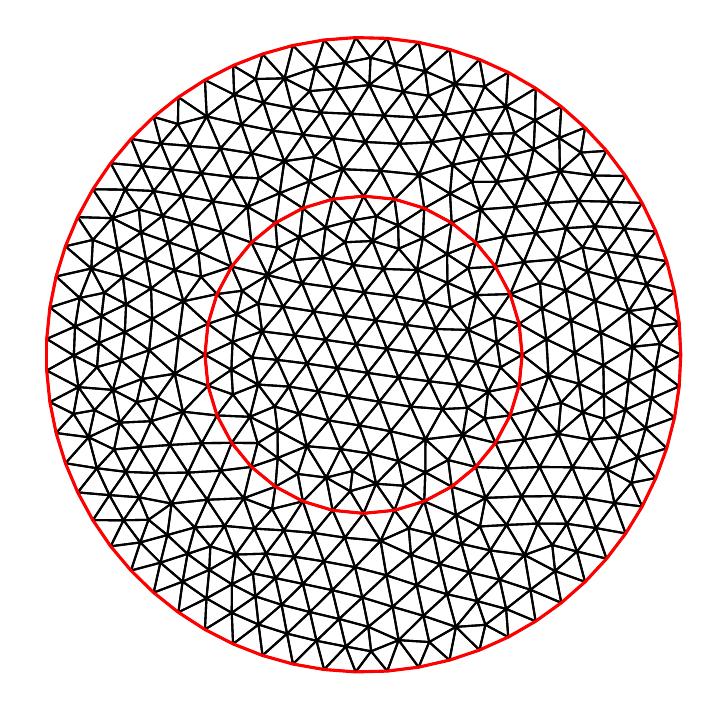}
  \end{minipage}
  \begin{minipage}{0.58\textwidth}
    \centering
    \includegraphics[height=5cm]{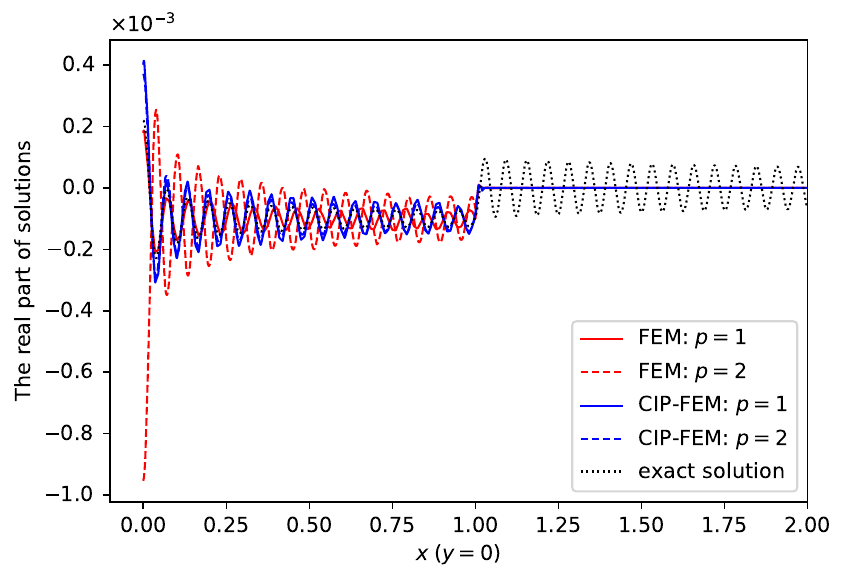}
  \end{minipage}
  \caption{Left: A sample mesh for the truncated PML problem. Right: The intersections of the real parts of exact solution and discrete solutions with the plane $y=0$ for $k=100$ and $h$ satisfying $kh/p \approx 1.5$.}
  \label{fig:PML2D}
\end{figure}

Next, we plot in Figure \ref{fig:test_hp} the relative $H^1$-errors of the FE and CIP-FE solutions, and the finite element interpolation for different $k$ and $p$, respectively. From these graphs, we find that the linear FEM ($p=1$) suffers from the pollution error when the mesh size is large (or, the degrees of freedom is small), especially for the case of large wave number (see the right graph). By increasing the order of finite elements, the pollution errors are significantly reduced, see the plots of FEM for $p=2$ and $p=3$. 
On the other hand, the graphs of Figure \ref{fig:test_hp} show that the performance of CIP-FEM is similar to that of FEM for a smaller wave number $k=20$, but, for $k=150$, the pollution range of the former is much smaller than that of the latter, whether for the linear element or higher-order elements. This behavior indicates that the CIP-FEM indeed reduces the pollution errors. 
\begin{figure}[t]
  \centering
  \includegraphics[width=0.9\textwidth]{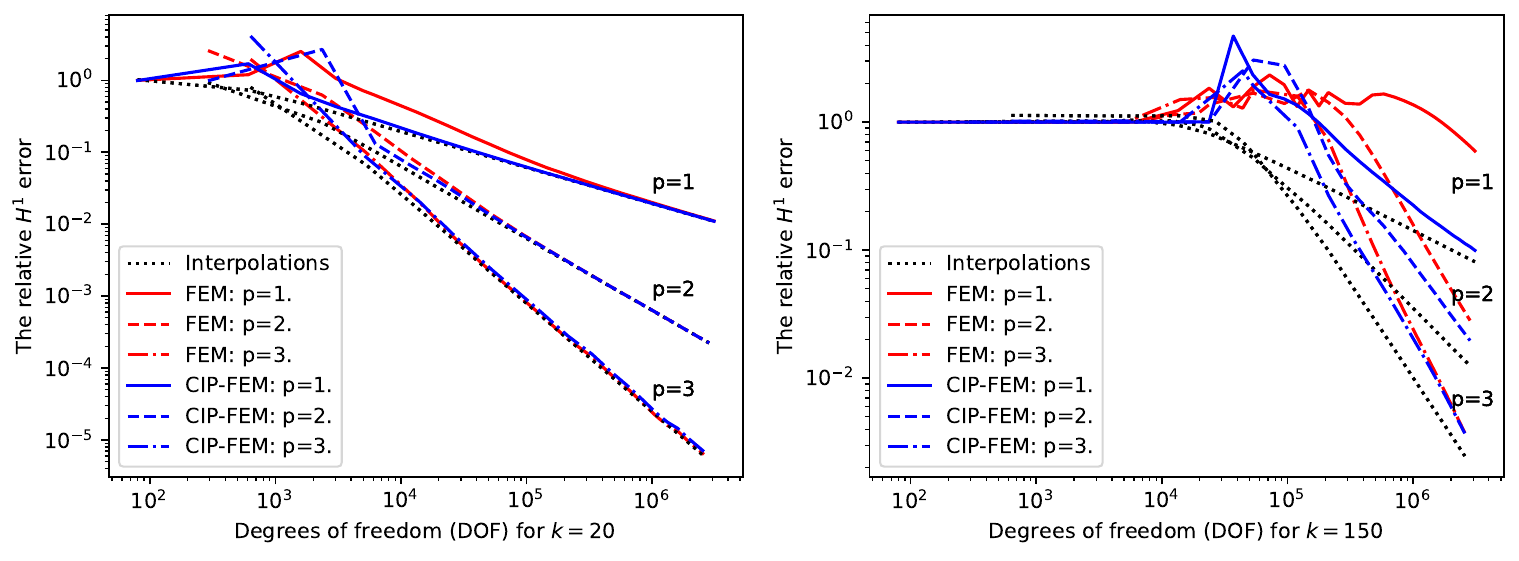}
  \caption{The relative $H^1$-errors of FEM and CIP-FEM, compared with the relative $H^1$-error of the finite interpolation (dotted) for $k=20$ (left graph), and $k=150$ (right graph) with orders $p=1,2,3$, respectively. 
  }
  \label{fig:test_hp}
\end{figure}

In Figure \ref{fig:test_fix_khp}, we fix $kh/p \approx 1.2$ with $p=1,2,3$, and plot the errors of FEM, CIP-FEM and finite element interpolation as the wave number $k$ increases from $5$ to $450$. 
The potential of higher-order CIP-FEM in reducing the pollution error of Helmholtz problem with large wave numbers is clearly shown in this figure again. 
As $k$ increases, the pollution effects appear for linear FEM when the wave number $k$ is approximately greater than $20$, for second-order FEM ($p=2$) when $k \geq 100$, and for third-order FEM ($p=3$) when $k \geq 300$,
while the pollution errors for CIP-FEMs ($p=1,2,3$) are barely visible for $k$ up to $450$. 
\begin{figure}[t]
  \centering
  \includegraphics[height=6.5cm]{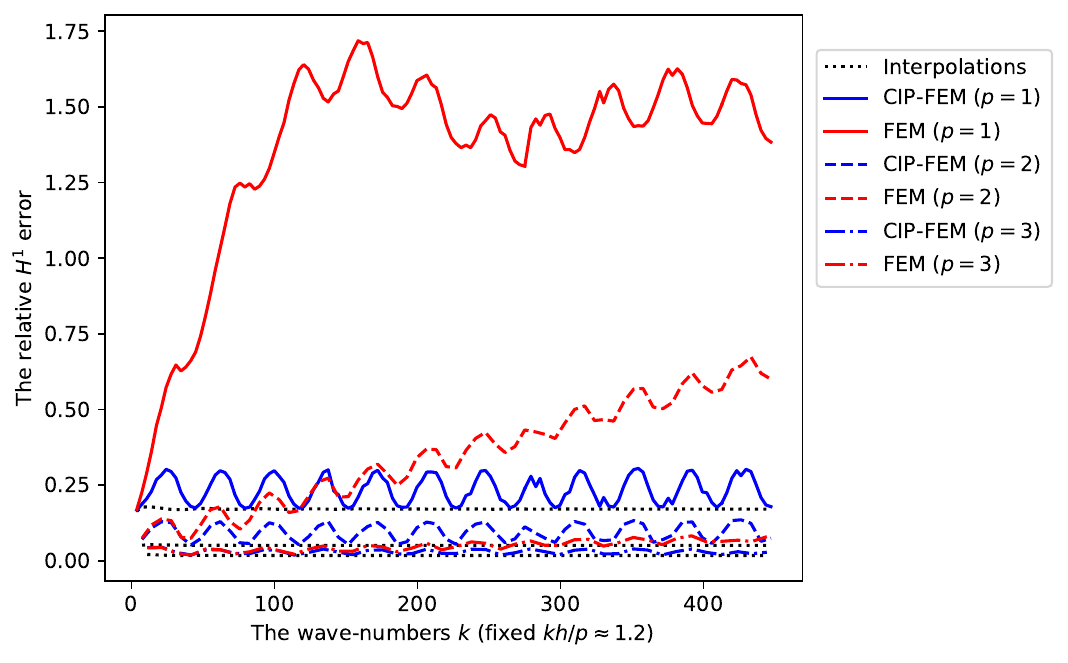}
  \caption{The relative $H^1$-errors of the FEM, CIP-FEM, and the finite element interpolation.}
  \label{fig:test_fix_khp}
\end{figure}




\bibliographystyle{abbrv} 
\bibliography{references}

\end{document}